\newtheorem{mytheorem}{Theorem}
\newtheorem{theorem}{Theorem}
\newtheorem{lemma}{Lemma}[section]
\newtheorem{proposition}[lemma]{Proposition}
\newtheorem{conjecture}[lemma]{Conjecture}
\theoremstyle{definition}
\newtheorem{definition}[lemma]{Definition}
\theoremstyle{remark}
\newtheorem*{rmk}{Remark}
\newtheorem*{notation}{Notation}
\DeclareFontFamily{OMS}{rsfs}{\skewchar\font'60}
\DeclareFontShape{OMS}{rsfs}{m}{n}{<-5>rsfs5 <5-7>rsfs7 <7->rsfs10 }{}
\DeclareSymbolFont{rsfs}{OMS}{rsfs}{m}{n}
\DeclareSymbolFontAlphabet{\scr}{rsfs}
\newcommand{\la}{\lambda}
\newcommand{\si}{\sigma}
\newcommand{\ze}{\zeta}
\newcommand{\colim@}[2]{%
	\vtop{\m@th\ialign{##\cr
			\hfil$#1\operator@font colim$\hfil\cr
			\noalign{\nointerlineskip\kern1.5\ex@}#2\cr
			\noalign{\nointerlineskip\kern-\ex@}\cr}}%
}
\newcommand{\colim}{%
	\mathop{\mathpalette\colim@{\rightarrowfill@\textstyle}}\nmlimits@
}
\newcommand{\C}{\mathbb {C}}
\newcommand{\N}{\mathbb {N}}
\newcommand{\R}{\mathbb {R}}
\newcommand{\Z}{\mathbb {Z}}
\newcommand{\Q}{\mathbb {Q}}
\newcommand{\F}{\mathbb {F}}
\DeclareMathOperator{\id}{{id}}
\DeclareMathOperator{\Id}{{Id}}
\DeclareMathOperator{\sgn}{{sgn}}
\newcommand{\pconf}{\operatorname{PConf}}
\newcommand{\conf}{\operatorname{Conf}}
\DeclareMathOperator{\Ind}{{Ind}}
\DeclareMathOperator{\Res}{{Res}}
\title{Computations of Stable Multiplicities in the Cohomology of Configuration Space} 
\author{Emil Geisler}
\address{University of California Los Angeles}
\email{emilg@math.ucla.edu}
\begin{document}

	\begin{abstract}
		We describe an algorithm to compute the stable multiplicity of a family of irreducible representations in the cohomology of ordered configuration space of the plane. Using this algorithm, we compute the stable multiplicities of all families of irreducibles given by Young diagrams with $23$ boxes or less up to cohomological degree $50$. In particular, this determines the stable cohomology in cohomological degrees $0 \leq i \leq 11$. We prove related qualitative results and formulate some conjectures. 
	\end{abstract}

	\maketitle
	\bigskip
	\tableofcontents
	
	\section{Introduction}
	The $n$th ordered configuration space of the plane is the space of $n$-tuples in $\C^n$ with distinct coordinates, denoted in this article as $\pconf_n(\C)$:
	$$\pconf_n(\C) := \{(x_1, \dots, x_n) \in \C^n \ | \ x_i \neq x_j \text{ when $i \neq j$}\} \subset \C^n.$$ 
	There is a natural action of the symmetric group of $n$ elements (denoted here by $S_n$) on $\pconf_n(\C)$ by permuting the coordinates. This induces an action of $S_n$ on cohomology so $H^i(\pconf_n(\C);\C)$ is an $S_n$-representation. 
	\vspace{.1in}\par
	The study of the cohomology of $\pconf_n(\C)$ has had a rich history. A full historical account would be out of place here, but we give notable developments to contextualize our results. Arnol'd described the cohomology rings $H^\bullet(\pconf_n(\C);\Z)$ and the Betti numbers $H^i(\pconf_n(\C);\Z)$ in 1969 \cite{Arnold1969}, and Brieskorn expanded this work to include descriptions of the cohomology of more general hyperplane complements in 1971 \cite{brieskorn2006groupes}. 
	Fred Cohen related the cohomology of $\pconf_n(\C)$ to the Gerstenhaber operad and described the ``little disks'' operad in the 1970s \cite{cohen2006homology}. A connection between the cohomology of $\pconf_n(\C)$ (and related spaces) to combinatorics of partitions was described by Orlik-Solomon in 1980 \cite{orlik1980combinatorics}, leading to work by Bj\"{o}rner, Stanley, and many others \cite{bjorner1994subspace, stanley1982some}. 
	Lehrer-Solomon gave a formula for the $S_n$-representation theoretic structure of $H^i(\pconf_n(\C);\Z)$ in 1986 \cite{lehrersolomon} which we use to prove Theorem \ref{boundvanishing}. In the 1990s, Getzler studied the mixed Hodge structure of configuration spaces \cite{getzler1996mixedhodgestructuresconfiguration}. 
	In particular, this enabled unstable computations related to the stable ones performed here (see Section \ref{past_data}). In 2013, Church-Farb found that $H^i(\pconf_n(\C))$ \textit{stabilizes} as $n \to \infty$ when viewed as an $S_n$ representation \cite{church2013representation}. We now describe the notion of Church-Farb's \textit{representation stability}.
	\vspace{.1in}\par
	Let $\la$ be a non-increasing tuple of positive integers $\la = (a_1, \dots, a_r)$ with $k = \sum_i a_i$. Then for any $n$ such that $n - k \geq a_1$, let $V(\la)_n$ denote the irreducible representation of $S_n$ given by the partition $(n - k) + a_1 + \dots + a_r$ of $n$ with respect to Young's description of irreducible representations of $S_n$ (as in e.g. \cite[Sec. 4.1]{fulton_and_harris}). We write $V(\la)$ to denote the family of irreducible representations $\{V(\la)_n\}_{n \geq a_1 + k}$. Recall that any finite dimensional $S_n$ representation can be expressed as a sum of irreducibles. We let $d_{i,n}(\la)$ denote the \textit{multiplicity} of $V(\la)_n$ in $H^i(\pconf_n(\C);\C)$, which is the number of direct summands isomorphic to $V(\la)_n$ that occur when $H^i(\pconf_n(\C);\C)$ is written as a sum of irreducible $S_n$ representations. Representation stability in the sense of Church-Farb \cite{church2013representation} refers to the phenomenon that for any partition $\la$ and degree of cohomology $i$, $d_{i,n}(\la)$ is eventually constant as $n \to \infty$, and is zero for all but finitely many $\la$ \cite[Theorem 4.1]{church2013representation}. In particular, $d_{i,n}(\la)$ is constant for $n \geq 4i$, which was refined to $n \geq 3i + 1$ by Hersh-Reiner  who also showed this bound is sharp \cite{hersh2017representation}.
	Let $d_i(\la)$ denote the value $d_{i,n}(\la)$ for $n \gg 0$. We will refer to $d_i(\la)$ as the \textit{stable multiplicity} of $V(\la)$ in $H^i(\pconf(\C);\C)$.
	\vspace{.1in}\par
	The non-negative integers $d_i(\la)$ have geometric, arithmetic, and combinatorial relevance.  
	Determining these coefficients is an open question first proposed by Farb \cite[pg. 3]{farbrepstability} \cite[pg. 39]{church2013representation}, although the unstable coefficients $d_{i,n}(\la)$ have been of interest for longer. Ideally, a simple formula exists for $d_i(\la)$ in terms of $i$ and $\la$, although none has been found so far. A satisfactory solution could consist of completing the following objectives:
	\begin{enumerate}
		\item[\textbf{(1)}] Determine a formula for $d_i(\la)$.
		\item[\textbf{(2)}] Compute $d_i(\la)$ for a large number of examples.
		\item[\textbf{(3)}] Prove qualitative properties of $d_i(\la)$, like when $d_i(\la) = 0$ or asymptotics in terms of $i$ and the size of $\la$.
	\end{enumerate}
	
	Chen answered objective \textbf{(1)} by expressing the $d_i(\la)$ as the coefficients of a formal power series in terms of the character polynomial of $V(\la)$ (see \hyperref[polystatformula]{Theorem 2}), expanding on work by Church-Ellenberg-Farb \cite[Theorem 1]{pn-fqt} and Fulman \cite{fulman2014}. Results on objective \textbf{(2)} have been limited to small $\la$ or small $i$: Chen computed $d_i(\la)$ for small $\la$ (when $\la$ is a partition of $2$ or less) \cite[Ex. 1,2,3]{chen2016twisted}, and Farb computed the coefficients $d_i(\la)$ for all $\la$ when $i = 4$ \cite[pg. 9]{farbrepstability}. 
	Bergstr\"{o}m made related computations on the cohomology of the moduli space of $n$-pointed genus zero curves \cite{bergstrom_data} which can be used to recover the stable coefficients $d_i(\la)$ when $i \leq 6$. Further discussion of past computations can be found in Section \ref{past_data}.
	\vspace{.1in}\par
	Based on Chen's formula for $d_i(\la)$, we implement an algorithm  which expands objective \textbf{(2)} significantly. In particular, we compute the values $d_i(\la)$ for all partitions $\la$ of $N$ with $N \leq 23$ and all cohomological degrees $0 \leq i \leq 50$. Combined with Theorem \ref{boundvanishing}, this data determines $d_{i}(\la)$ for \textit{all} partitions $\la$ when $i \leq 11$. 
	Our data has informed conjectures about $d_i(\la)$ (objective \textbf{(3)}) and led to the proofs of Theorems \ref{boundvanishing}, \ref{asymptotic}, and \ref{length_bound} given in Section \ref{proofs}. 
	
	\subsection{Results and Conjectures}
	\begin{notation}
		Fix a positive integer $k$ and let $\la = (a_1, \dots, a_r)$ be a partition of $k$, so $k = a_1 + \dots + a_r$. We will also write $|\la| = k$ to denote that $\la$ is a partition of $k$.
	\end{notation}
	
	\begin{mytheorem} \label{boundvanishing}
		For $0 \leq i < k/2$, $d_i(\la) = 0$. 
	\end{mytheorem}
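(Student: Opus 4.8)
The plan is to feed the Lehrer--Solomon description of $H^i(\pconf_n(\C);\C)$ into Pieri's rule. By \cite{lehrersolomon}, as an $S_n$-representation
\[
  H^i(\pconf_n(\C);\C)\;\cong\;\bigoplus_{[\sigma]}\Ind_{Z(\sigma)}^{S_n}\chi_\sigma ,
\]
the sum running over conjugacy classes of $\sigma\in S_n$ with exactly $n-i$ cycles, where $Z(\sigma)$ is the centralizer and $\chi_\sigma$ a linear character. Two features of this are all I need. First, if $\sigma$ has $m_d$ cycles of length $d$ then $\sum_d m_d=n-i$ and $\sum_d d\,m_d=n$, so $\sum_{d\ge2}(d-1)m_d=i$; since $d\le2(d-1)$ for $d\ge2$, the number of points moved by $\sigma$ is $\sum_{d\ge2}d\,m_d\le 2i$, i.e. $\sigma$ has at least $n-2i$ fixed points. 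Second, writing $Z(\sigma)=S_{m_1}\times G'$ with $S_{m_1}$ permuting the fixed points and $G'=\prod_{d\ge2}(\Z/d\wr S_{m_d})\le S_{n-m_1}$, the character $\chi_\sigma$ is trivial on the factor $S_{m_1}$ (the fixed points of $\sigma$ do not appear in the Orlik--Solomon monomial attached to $\sigma$).

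Next I would unwind a single summand. Set $t:=n-m_1\le 2i$, and write $\Ind_{G'}^{S_{n-m_1}}\bigl(\chi_\sigma|_{G'}\bigr)=\bigoplus_{\mu\vdash t}c_\mu V_\mu$. By transitivity of induction through $S_{m_1}\times S_{n-m_1}$ together with the triviality above,
\[
  \Ind_{Z(\sigma)}^{S_n}\chi_\sigma=\bigoplus_{\mu\vdash t}c_\mu\,\Ind_{S_{m_1}\times S_{n-m_1}}^{S_n}\bigl(\mathbf 1\boxtimes V_\mu\bigr)=\bigoplus_{\mu\vdash t}c_\mu\bigoplus_\nu V_\nu ,
\]
where, by Pieri's rule, $\nu$ ranges over the partitions of $n$ with $\mu\subseteq\nu$ and $\nu/\mu$ a horizontal strip. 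Hence every irreducible constituent $V_\nu$ of $H^i(\pconf_n(\C);\C)$ is obtained from some $\mu$ with $|\mu|\le 2i$ by adding a horizontal strip.

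Finally I would specialize. Suppose $V_\nu=V(\la)_n$ for $\la=(a_1,\dots,a_r)$, so $\nu=(n-k,a_1,\dots,a_r)$ with $k=|\la|$. If $\nu/\mu$ is a horizontal strip then the parts interlace, $\nu_1\ge\mu_1\ge\nu_2\ge\mu_2\ge\cdots$, so $\mu_j\ge\nu_{j+1}=a_j$ for $j=1,\dots,r$ and therefore $|\mu|\ge a_1+\cdots+a_r=k$. Together with $|\mu|\le 2i$ this forces $k\le 2i$. Thus when $i<k/2$ no constituent of $H^i(\pconf_n(\C);\C)$ is isomorphic to $V(\la)_n$, for any $n\ge a_1+k$; in particular $d_{i,n}(\la)=0$ for all such $n$, so $d_i(\la)=0$.

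I expect the only delicate point to be the second feature above: reading off from the Lehrer--Solomon construction that $\chi_\sigma$ restricts trivially to the fixed-point factor $S_{m_1}$ (the behaviour of $\chi_\sigma$ on the length-$\ge2$ cycles is irrelevant to the argument). Alternatively one can avoid $\chi_\sigma$ altogether: the no-broken-circuit basis of $H^i(\pconf_n(\C);\C)$ consists of products of $i$ of the classes $\omega_{ab}=d\log(z_a-z_b)$, each supported on the at most $2i$ vertices of a forest with $i$ edges, and the Orlik--Solomon relations preserve the vertex set of a monomial; this exhibits $H^i(\pconf_n(\C);\C)$ as a direct sum of representations $\Ind_{S_t\times S_{n-t}}^{S_n}(U\boxtimes\mathbf 1)$ with $t\le 2i$, after which the two preceding paragraphs apply verbatim.
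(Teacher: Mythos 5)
Your argument is correct, but it is a genuinely different route from the paper's. The paper proves Theorem \ref{boundvanishing} by a dimension count: it shows (Lemma \ref{dimh_i}, again via Lehrer--Solomon, or just Arnol'd's Stirling-number formula) that $\dim H^i(\pconf_n(\C);\C)$ agrees with a polynomial in $n$ of degree $2i$ for $n\geq 2i$, while $\dim V(\la)_n$ is a polynomial of degree $k$ (Lemma \ref{dimV(la)_n}); if $d_i(\la)\neq 0$ then $V(\la)_n$ sits inside $H^i$ for all $n\gg 0$, so comparing degrees forces $k\leq 2i$. You instead track irreducible constituents directly: centralizer induction in stages through $S_{m_1}\times S_t$, triviality of the Lehrer--Solomon character on the fixed-point factor $S_{m_1}$, and Pieri's rule, giving that every constituent $V_\nu$ of $H^i(\pconf_n(\C);\C)$ satisfies $|\nu|-\nu_1\leq 2i$. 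This buys more than the paper's statement: you get the unstable vanishing $d_{i,n}(\la)=0$ for all $n$ when $i<k/2$, not just the stable one, and the method parallels the paper's own proof of Theorem \ref{length_bound} (bounding a different statistic of the constituents). The price is the one fact you flag, that $\xi_\sigma$ restricts trivially to $S_{m_1}$; this is indeed true in Lehrer--Solomon (and is forced already by $H^0$ being trivial and by $H^1\cong\Ind_{S_2\times S_{n-2}}^{S_n}\mathbf{1}$), and your fallback via the Orlik--Solomon/no-broken-circuit presentation --- monomial support has size at most $2i$ and is preserved by the relations, so $H^i$ is a sum of $\Ind_{S_t\times S_{n-t}}^{S_n}(U\boxtimes \mathbf{1})$ with $t\leq 2i$ --- is also sound; it is essentially the Church--Ellenberg--Farb statement that $H^i$ is generated in FI-degree at most $2i$, followed by the same Pieri/interlacing step. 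The paper's dimension-count proof, by contrast, needs no information about the characters $\xi_\sigma$ at all, only the Betti numbers.
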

	
	\begin{mytheorem}
		\label{asymptotic}
		Suppose $k > 0$. The sequence $d_0(\la), d_1(\la), \dots, d_i(\la), \dots$ is asymptotically $Ci^{k-1}$ for $C \in \Q^{>0}$ and thus \emph{eventually} non-decreasing.  More explicitly,
		$$\lim_{i \to \infty} \frac{d_i(\la)}{i^{k-1}} = \frac{2 \dim \la}{(k-1)!},$$ 
		where $\dim \la$ is the dimension of the irreducible $S_k$ representation given by $\la$.
	\end{mytheorem}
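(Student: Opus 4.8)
The plan is to read the asymptotics off Chen's formula (\hyperref[polystatformula]{Theorem~2}), which expresses $F_\la(t):=\sum_{i\ge 0}d_i(\la)\,t^i$ in terms of the character polynomial $X_\la$. Write $X_\la=\sum_\mu c_{\la,\mu}\prod_{m\ge 1}\binom{X_m}{\mu_m}$ in the monomial basis of character polynomials, the sum running over integer partitions $\mu$ with $\lVert\mu\rVert:=\sum_m m\mu_m\le k$, where $k:=|\la|$. Using the Grothendieck--Lefschetz/point-count package of Church--Ellenberg--Farb together with the purity of $H^\bullet(\pconf_n(\C))$, the first step is to put Chen's formula into the closed form
\[
F_\la(t)=(1+t)\,A_\la(-1/t),\qquad A_\la(q)=\sum_\mu c_{\la,\mu}\prod_{m\ge 1}\frac{\binom{P_m(q)}{\mu_m}}{(q^m+1)^{\mu_m}},
\]
where $P_m(q)\sim q^m/m$ counts monic irreducibles of degree $m$ over $\F_q$; here $A_\la(q)$ is the $n\to\infty$ limit of the average of $X_\la(\mathrm{Frob}_f)$ over squarefree monic $f\in\F_q[x]$ of degree $n$, and $1+t$ is the stable Poincar\'e polynomial of $\conf_n(\C)\simeq S^1$. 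In particular $F_\la$ is a \emph{rational} function of $t$ whose poles all lie at roots of unity.

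The second, central step is the local structure at $t=1$. Since $P_1(q)=q$ and $q^m+1$ vanishes at $q=-1$ precisely for odd $m$, the $\mu$-summand of $A_\la$ has a pole at $q=-1$ of order $\sum_{m\ \mathrm{odd}}\mu_m\le\lVert\mu\rVert\le k$, with equality forcing $\mu=(1^k)$. Thus $A_\la$ has a pole at $q=-1$ of order exactly $k$ with leading Laurent coefficient $c_{\la,(1^k)}\binom{-1}{k}=(-1)^k c_{\la,(1^k)}$, and transporting through $q=-1/t$ (so $q+1=-(1-t)/t$) shows $F_\la$ has a pole at $t=1$ of order $k$ with $\lim_{t\to1}(1-t)^k F_\la(t)=2\,c_{\la,(1^k)}$. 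The crux is the identity $c_{\la,(1^k)}=\dim\la$; equivalently, $n\mapsto\dim V(\la)_n=X_\la(n,0,0,\dots)$ is a polynomial of degree $k$ with leading coefficient $\dim\la/k!$. I would prove this by induction on $k$ from the $S_n$-branching rule $\mathrm{Res}^{S_n}_{S_{n-1}}V(\la)_n\cong V(\la)_{n-1}\oplus\bigoplus_i V(\la^{(i)})_n$, the sum over the box-removals $\la^{(i)}$ of $\la$, together with $\sum_i\dim\la^{(i)}=\dim\la$, which is the branching rule for $S_k$.

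The third step transfers this to the coefficients. Any pole of $F_\la$ at a root of unity $\zeta\ne 1$ comes from a zero of some $q^m+1$ at $q=-1/\zeta$ with $m\ge 2$, so it has order at most $\max_\mu\sum_{m\in S_\zeta}\mu_m\le k/2$, where $S_\zeta=\{m:(-1/\zeta)^m=-1\}\subseteq\{2,3,\dots\}$. A partial-fraction expansion of $F_\la$ then gives $d_i(\la)=2\dim\la\binom{i+k-1}{k-1}+O(i^{k-2})$, the $O$-term collecting the subleading pole at $t=1$ and the bounded-in-modulus oscillations from the other roots of unity, whence $\lim_{i\to\infty}d_i(\la)/i^{k-1}=2\dim\la/(k-1)!\in\Q^{>0}$, which is the asserted limit. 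For the ``eventually non-decreasing'' part: when $k\ge 3$, $d_{i+1}(\la)-d_i(\la)=\tfrac{2\dim\la}{(k-2)!}i^{k-2}+(\text{lower order})$ is positive for $i\gg 0$, since the subleading pole at $t=1$ contributes a polynomial of degree $\le k-2$ (difference of degree $\le k-3$) and the poles at $\zeta\ne 1$ contribute a quasi-polynomial of degree $\le k/2-1<k-2$; the case $k=1$ is immediate as $F_{(1)}(t)=(t+t^2)/(1-t)$ has no pole besides $t=1$ (so $d_i((1))=2$ for $i\ge 2$); and $k=2$ reduces to $\la\in\{(2),(1,1)\}$, where a direct residue computation at $t=\pm i$ shows the oscillating part of $d_i(\la)$ has consecutive differences bounded by $1$, hence below the linear increment $2$.

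The main obstacle is the second step: putting Chen's formula into rational form and, within it, establishing that the pole at $t=1$ has order \emph{exactly} $k$ with positive leading coefficient — that is, that $c_{\la,(1^k)}=\dim\la\ne 0$ and no cancellation occurs among the top-order $\mu$-terms. The branching computation settles this cleanly, after which everything is routine singularity analysis, with only the small-$k$ bookkeeping for monotonicity requiring the explicit residue estimate noted above.
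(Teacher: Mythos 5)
Your proposal is correct, and it rests on the same two pillars as the paper's argument --- expand $\chi^\la$ in the binomial basis, feed it into Chen's formula, and show that the $\binom{X_1}{k}$-term dominates with coefficient $c_{\la,1^k}=\dim\la$ --- but the execution is genuinely different. The paper works at the level of power-series coefficients: it introduces the elementary notion of $r$-boundedness to show every $\Phi^\infty_\rho$ with $\rho\ne 1^k$ is $(k-2)$-bounded (Lemma \ref{bound_phi}), computes the coefficients of $\Phi^\infty_{1^k}$ explicitly via unsigned Stirling numbers to be eventually polynomial of degree $k-1$ with leading coefficient $2/(k-1)!$ (Lemma \ref{k-asym}), and reads off $F^\la_{1^k}=\chi^\la_{1^k}=\dim\la$ directly from Macdonald's formula (Theorem \ref{charpolyformula}). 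You instead sum the geometric series $z^m-z^{2m}+\dots=z^m/(1+z^m)$ to exhibit the generating function as a rational function with poles only at roots of unity, isolate a pole of order exactly $k$ at $t=1$ with leading coefficient $2c_{\la,1^k}$, bound the pole orders at the other roots of unity by $k/2$, and finish by partial fractions; and you get $c_{\la,1^k}=\dim\la$ by a branching-rule induction rather than from Macdonald. Your route buys a sharper error structure (a quasi-polynomial of degree at most $k/2-1$ plus a degree-$(k-2)$ polynomial), which in particular lets you genuinely argue the ``eventually non-decreasing'' clause --- something the paper asserts but does not separately justify, and which for $k=2$ does not follow from the asymptotic alone; the paper's route is more elementary and avoids the rationality bookkeeping. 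Two small imprecisions, neither fatal: the $\mu$-summand's pole at $q=-1$ can have order strictly less than $\sum_{m\ \mathrm{odd}}\mu_m$ because $P_m(-1)=0$ for odd $m\ge 3$ (this only strengthens your inequality, and the exact order is needed only for $\mu=1^k$, where it is indeed $k$); and the $k=2$ monotonicity step rests on an asserted residue computation at $t=\pm i$ that should actually be carried out (also, in the branching induction the restricted summands are $V(\la^{(i)})_{n-1}$, not $V(\la^{(i)})_n$, a harmless index slip).
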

	
	\begin{mytheorem}\label{length_bound}
		For $0 \leq i < r$, $d_i(\la) = 0$.
	\end{mytheorem}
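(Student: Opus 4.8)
The plan is to derive Theorem~\ref{length_bound} from the Lehrer--Solomon description of $H^\bullet(\pconf_n(\C);\C)$ as an $S_n$-representation \cite{lehrersolomon}, together with the elementary fact that the multilinear Lie representations contain no trivial constituent. Throughout write $\lambda[n]=(n-k,a_1,\dots,a_r)$ for the partition of $n$ indexing $V(\lambda)_n$, which is defined for $n\ge k+a_1$. The argument produces $d_{i,n}(\lambda)=0$ for every such $n$ whenever $i<r$, so no separate appeal to representation stability is needed.

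By the Lehrer--Solomon theorem --- equivalently, the Whitney-homology decomposition of the Orlik--Solomon algebra of the braid arrangement, or the description of $H_*(\pconf_n(\R^2))$ via the little $2$-disks operad --- one has an isomorphism of $S_n$-representations
$$H^i(\pconf_n(\C);\C)\ \cong\ \bigoplus_{\mu}\ \Ind_{S_{\pi(\mu)}}^{S_n}\!\Big(\bigotimes_{B\in\pi(\mu)}\mathrm{Lie}(|B|)\otimes\sgn_{S_B}\Big),$$
where $\mu$ runs over partitions of $n$ with exactly $n-i$ parts, $\pi(\mu)$ is a chosen set partition of $\{1,\dots,n\}$ of type $\mu$ with stabiliser $S_{\pi(\mu)}=\prod_{b\ge1}(S_b\wr S_{m_b})$ (here $m_b$ is the number of parts of $\mu$ equal to $b$, and $S_B$ denotes the symmetric group on a block $B$), $\mathrm{Lie}(b)=\Ind_{C_b}^{S_b}(\zeta_b)$ is the arity-$b$ Lie representation with $\zeta_b$ a faithful character of the cyclic subgroup $C_b\le S_b$, and the factors $S_{m_b}$ permute the tensor factors indexed by the size-$b$ blocks (up to a sign twist that will be irrelevant). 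The only property of $\mathrm{Lie}(b)$ used is that $\langle\mathrm{Lie}(b),\text{triv}\rangle_{S_b}=\langle\zeta_b,\text{triv}\rangle_{C_b}=0$ for $b\ge2$; equivalently $S^{(1^{b})}$ does not occur in $\mathrm{Lie}(b)\otimes\sgn_{S_b}$, so every irreducible constituent $S^\rho$ of $\mathrm{Lie}(b)\otimes\sgn_{S_b}$ has at most $b-1$ rows.

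Next, by Frobenius reciprocity, $d_{i,n}(\lambda)=\sum_\mu\big\langle\bigotimes_{B\in\pi(\mu)}\mathrm{Lie}(|B|)\otimes\sgn_{S_B},\ \Res_{S_{\pi(\mu)}}S^{\lambda[n]}\big\rangle$. Fix $\mu$, let $m_1$ be its number of parts equal to $1$, and split $S_{\pi(\mu)}=S_{m_1}\times H$, where $S_{m_1}$ acts on the fixed points and $H=\prod_{b\ge2}(S_b\wr S_{m_b})\le S_{n-m_1}$; the singleton blocks contribute the trivial representation of $S_{m_1}$. Restricting $S^{\lambda[n]}$ along $S_{m_1}\times S_{n-m_1}\hookrightarrow S_n$ and using the Pieri rule to extract the trivial $S_{m_1}$-isotypic component reduces the $\mu$-summand to $\sum_\beta\big\langle L',\ \Res_{H}S^\beta\big\rangle$, where $L'=\bigotimes_{|B|\ge2}\mathrm{Lie}(|B|)\otimes\sgn_{S_B}$ and $\beta$ runs over the partitions of $n-m_1$ that interlace $\lambda[n]$, i.e.\ with $\lambda[n]/\beta$ a horizontal strip. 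If such a term is nonzero, two opposing bounds on $\beta$ apply. Interlacing forces $\beta_j\ge(\lambda[n])_{j+1}=a_j\ge1$ for $j=1,\dots,r$, so $\beta$ has at least $r$ rows. On the other hand, restricting $L'$ and $S^\beta$ to the Young subgroup $\prod_{|B|\ge2}S_B\le H$ (the base of the wreath products) and applying the iterated Littlewood--Richardson rule, nonvanishing forces $s_\beta$ to occur in a product $\prod_{|B|\ge2}s_{\rho_B}$ with each $S^{\rho_B}$ a constituent of $\mathrm{Lie}(|B|)\otimes\sgn_{S_B}$; hence $\beta$ has at most $\sum_{|B|\ge2}(|B|-1)=\sum_{b\ge2}(b-1)m_b=i$ rows. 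Therefore a nonzero summand requires $r\le i$, so for $i<r$ every term vanishes and $d_{i,n}(\lambda)=0$ for all $n\ge k+a_1$; in particular $d_i(\lambda)=0$.

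The step requiring the most care is pinning down the precise form of the Lehrer--Solomon decomposition, especially the $\otimes\sgn_{S_B}$ twist on each block: it is exactly this twist that makes the needed vanishing reduce to the easy assertion ``$\mathrm{Lie}(b)$ has no trivial constituent'' rather than to a statement about the \emph{sign} constituent of $\mathrm{Lie}(b)$, which would fail for $b=2$. Everything else is routine branching-rule bookkeeping. We note that the same argument in fact shows that every irreducible constituent of $H^i(\pconf_n(\C);\C)$ is indexed by a partition with at most $i+1$ rows, which is equivalent to Theorem~\ref{length_bound}.
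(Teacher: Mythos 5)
Your proof is correct and is essentially the same as the paper's: both rest on the Lehrer--Solomon decomposition, restriction to the base of the wreath-product stabilizer (with the singleton blocks handled via the trivial $S_{m_1}$-action), the key fact that $\mathrm{Lie}(b)$ contains no trivial constituent --- so every constituent of $\mathrm{Lie}(b)\otimes\sgn_{S_b}$ has at most $b-1$ rows --- and Pieri/Littlewood--Richardson bookkeeping. The only cosmetic difference is that the paper packages this as the unstable bound $\ell\big(H^i(\pconf_n(\C);\C)\big)\le i+1$ on all irreducible constituents, whereas you extract the multiplicity of the single irreducible $V(\la)_n$ directly; the underlying mechanism is identical.
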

	
	\begin{rmk}
		Theorem \ref{boundvanishing} is necessary for our computation of the stable cohomology of $\pconf(\C)$ for a fixed $i$ in Section \ref{fixeddegree}, since it ensures we only need to compute $d_i(\la)$ for $\la$ with $2i$ or less boxes, and the remaining coefficients are zero.
	\end{rmk} 
	
	\begin{conjecture} \label{nondecconjecture}
		So long as $k > 0$ (i.e. $V(\la)$ is not the family of trivial representations), the sequence $d_0(\la), d_1(\la), \dots$ is non-decreasing.
	\end{conjecture}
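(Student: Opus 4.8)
A natural approach to Conjecture~\ref{nondecconjecture} is to combine Chen's generating-function description of the stable multiplicities (Theorem~2) with the asymptotics of Theorem~\ref{asymptotic}, and, if possible, with a structural reduction that makes the monotonicity transparent. As a first step I would rewrite Chen's formula as an identity of rational functions
\[
\sum_{i\ge 0} d_i(\la)\, t^i \;=\; \frac{G_\la(t)}{(1-t)^{k}},
\]
where $G_\la$ is a polynomial: that the only pole is at $t=1$ and of order exactly $k$, with $G_\la(1)=2\dim\la$, is forced by the $i^{k-1}$ growth in Theorem~\ref{asymptotic}, and $t^{\lceil k/2\rceil}\mid G_\la$ by Theorem~\ref{boundvanishing}. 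Monotonicity of $d_0(\la),d_1(\la),\dots$ is then \emph{equivalent} to the statement that $(1-t)\sum_i d_i(\la)t^i = G_\la(t)/(1-t)^{k-1}$ has non-negative coefficients; and since $1/(1-t)^{k-1}$ has non-negative coefficients, it would suffice to prove the $h$-positivity statement that $G_\la$ itself has non-negative coefficients (for $k=1$ the two statements coincide). The first task is thus to check $h$-positivity of $G_\la$ against the computed data and, if it holds, to read it off the structure of Chen's product.

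A second, more geometric line of attack uses the classical homotopy splitting of $\pconf_n(\C)$. The affine group $\mathrm{Aff}(\C)=\{z\mapsto az+b: a\in\C^\times,\, b\in\C\}\simeq\C^\times$ acts freely on $\pconf_n(\C)$ commuting with the $S_n$-action, with quotient the moduli space $\mathcal{M}_{0,n+1}$ ($S_n\subset S_{n+1}$ acting by fixing the extra marked point). The quotient map is a principal $\mathrm{Aff}(\C)$-bundle on which the total winding class $\eta_n\in H^1(\pconf_n(\C);\mathbb{Q})$ restricts to a nonzero multiple of the generator of each fiber, so its Serre spectral sequence degenerates; since the sequence is $S_n$-equivariant we obtain, over $\mathbb{Q}$,
\[
H^i(\pconf_n(\C);\mathbb{Q}) \;\cong\; H^i(\mathcal{M}_{0,n+1};\mathbb{Q}) \,\oplus\, H^{i-1}(\mathcal{M}_{0,n+1};\mathbb{Q})
\]
as $S_n$-representations. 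Hence $d_i(\la)=e_i(\la)+e_{i-1}(\la)$, where $e_i(\la)$ is the stable multiplicity of $V(\la)$ in $H^i(\mathcal{M}_{0,n+1};\mathbb{Q})$ (as an $S_n$-representation), so that $d_i(\la)-d_{i-1}(\la)=e_i(\la)-e_{i-2}(\la)$. Thus Conjecture~\ref{nondecconjecture} is \emph{equivalent} to $e_i(\la)\ge e_{i-2}(\la)$ for all $i$, i.e.\ to the even- and odd-indexed subsequences of $\bigl(e_i(\la)\bigr)_{i\ge0}$ each being non-decreasing. The natural way to prove this would be to produce an $S_n$-equivariant, stably injective degree-$2$ operation $H^{i-2}(\mathcal{M}_{0,n+1})\to H^{i}(\mathcal{M}_{0,n+1})$ on $V(\la)$-isotypic components for $\la\ne\varnothing$ — e.g.\ cup product with a suitable degree-$2$ class, or a Lefschetz-type operator.

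I expect the main obstacle to be exactly that construction — equivalently, the $h$-positivity of $G_\la$. There is \emph{no} $S_n$-invariant positive-degree cohomology class on $\mathcal{M}_{0,n+1}$: the splitting above gives $H^{\ge1}(\mathcal{M}_{0,n+1};\mathbb{Q})^{S_n}=0$ by induction on $i$. Moreover $\mathcal{M}_{0,n+1}$ is affine, so Hard Lefschetz is unavailable, and any degree-$2$ operation forcing $e_i(\la)\ge e_{i-2}(\la)$ would have to be genuinely new — presumably built from a combinatorial model of the cohomology (Orlik--Solomon, or the Lehrer--Solomon description used for Theorem~\ref{boundvanishing}) in which the desired injection can be written down explicitly. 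On the generating-function side the corresponding danger is that $G_\la$ need not be $h$-positive, so that non-negativity of $G_\la(t)/(1-t)^{k-1}$ would rest on cancellation invisible coefficient-by-coefficient. Finally, Theorem~\ref{asymptotic} controls only the tail, and ineffectively: it gives \emph{eventual} monotonicity with no bound on where it starts, so it does not reduce the conjecture to the finite verification already carried out for $|\la|\le 23$. A realistic intermediate target is therefore to make the asymptotics effective — to show that $d_i(\la)$ equals an explicit degree-$(k-1)$ polynomial in $i$ for all $i\ge\deg G_\la-k+1$, that this polynomial is non-decreasing on that range, and that $\deg G_\la$ is bounded in terms of $k$ alone — which, combined with Theorems~\ref{boundvanishing} and~\ref{length_bound} (which pin down the initial range where the sequence is identically zero) and with the computed data, would settle Conjecture~\ref{nondecconjecture} for all small $\la$, and for all $\la$ given such a uniform degree bound.
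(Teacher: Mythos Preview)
This statement is a \emph{conjecture} in the paper; the paper offers no proof, only computational evidence and the qualitative support of Theorem~\ref{asymptotic}. So there is no ``paper's proof'' to compare against, and your proposal is, appropriately, a research outline rather than a claimed proof. That said, one of your structural claims is false and undermines the first approach.

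Your assertion that $\sum_{i\ge 0} d_i(\la)\,t^i = G_\la(t)/(1-t)^k$ with $G_\la$ a polynomial is incorrect: the rational function has poles at roots of unity other than $t=1$. Substituting $z=-t$ in Chen's formula, the factor $(z^i - z^{2i}+\cdots)^{j_i} = z^{ij_i}/(1+z^i)^{j_i}$ contributes denominators $(1-t^i)^{j_i}$ for odd $i$ and $(1+t^i)^{j_i}$ for even $i$, and these do not all cancel. You can see this directly in the paper's data: for $\la=(2)$ the first differences $d_i-d_{i-1}$ settle into the period-$4$ pattern $\dots,3,3,1,1,3,3,1,1,\dots$, reflecting a simple pole at $t=\pm i$ coming from $\Phi^\infty_{(2)}(z) = (1-z)^2/2(1+z^2)$. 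Consequently $d_i(\la)$ is eventually a \emph{quasi}-polynomial in $i$, not a polynomial, so neither the ``$h$-positivity of $G_\la$'' formulation nor the proposed ``effective asymptotics'' target (that $d_i(\la)$ equals a fixed degree-$(k-1)$ polynomial past some point) survives as stated. Any generating-function approach must contend with these extra poles; the correct denominator is a product of cyclotomic factors, and the sufficient positivity condition on the numerator becomes correspondingly more delicate.

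Your second approach, via the $S_n$-equivariant splitting $H^i(\pconf_n(\C))\cong H^i(\mathcal{M}_{0,n+1})\oplus H^{i-1}(\mathcal{M}_{0,n+1})$ (which the paper records as Equation~\eqref{moduli_space_comparison_with_pconf}), is a sound and genuinely useful reformulation: the equivalence $d_i-d_{i-1}=e_i-e_{i-2}$ is correct, and your diagnosis of the obstacle---the absence of any obvious $S_n$-equivariant degree-$2$ Lefschetz-type operator on the affine variety $\mathcal{M}_{0,n+1}$---is apt. This line is worth pursuing, but as you yourself note, it currently lacks the key construction.
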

	
	\begin{conjecture}\label{upperbound}
		If $d_l(\la)$ is the first non-zero term of $d_0(\la), d_1(\la), \dots $, then $l \leq k$. Furthermore, the bound of $l = k$ is achieved if and only if $\la = (1, 1, \dots, 1)$, so $V(\la)$ is the family of $k$th wedge powers of the standard representation. 
	\end{conjecture}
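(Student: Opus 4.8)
The plan is to reduce the conjecture, via the Lehrer--Solomon description of $H^\bullet(\pconf_n(\C))$ (the same tool used for Theorems~\ref{boundvanishing} and \ref{length_bound}), to a combinatorial problem about vertical strips and plethysms, and then to solve it. In Frobenius-characteristic form the Lehrer--Solomon formula reads
\[
\mathrm{ch}\,H^i(\pconf_n(\C))=\omega\Big(\textstyle\sum_{\vec m}\;\prod_{j\ge 1}e_{m_j}[\psi_j]\Big),
\]
the sum over $\vec m=(m_1,m_2,\dots)$ with $\sum_j jm_j=n$ and $\sum_j(j-1)m_j=i$, where $e_m$ is elementary symmetric, $[\,\cdot\,]$ is plethysm, $\omega$ the standard involution, and $\psi_j=\mathrm{ch}(\mathrm{Lie}_j)$; recall $\psi_1=s_{(1)}$, $\psi_2=s_{(1,1)}$, that $s_{(j-1,1)}$ occurs in $\psi_j$ with multiplicity $1$ for all $j\ge 2$, and that for $j\ge 3$ no Schur component of $\psi_j$ is $s_{(j)}$ or $s_{(1^j)}$. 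Letting $n\to\infty$ and applying the Pieri rule to the vertical strip $e_{m_1}$, one obtains: $d_i(\la)\ne 0$ iff there are integers $m_2,m_3,\dots\ge 0$ and a partition $\rho$ with $|\rho|=\sum_{j\ge 2}jm_j$ such that $s_\rho$ occurs in $\prod_{j\ge2}e_{m_j}[\psi_j]$, $\la'_p\le\rho_p\le\la'_p+1$ for $1\le p\le\la_1$, $\rho_p\le1$ for $p>\la_1$, and $i=|\rho|-\sum_{j\ge2}m_j$. Hence $l$, the index of the first nonzero $d_i(\la)$, is the minimum of $|\rho|-\sum_{j\ge2}m_j$ over all such admissible data; since every admissible $\rho$ contains $\la'$ we get $|\rho|\ge k$ and $i\ge|\rho|-\tfrac12|\rho|\ge \tfrac k2$, which re-proves Theorem~\ref{boundvanishing}. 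The conjecture is the sharp statement about this minimum.

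For the upper bound $l\le k$ I would exhibit one admissible pair of cost $\le k$, using $\vec m$ supported at a single index: when $\vec m$ is supported at $m_2$ (a product of transpositions) admissibility of $\rho$ becomes ``$s_\rho$ occurs in $e_{m_2}[s_{(1,1)}]$'' with cost $\tfrac12|\rho|$, and when $\vec m$ is supported at $m_{j_0}=1$ (one $j_0$-cycle) it becomes ``$s_\rho$ occurs in $\psi_{j_0}$'' with cost $j_0-1$. Taking $\rho$ to be the minimal partition containing $\la'$ that fits the window $\rho_p\le\la'_p+1$ and has an admissible shape, the point is to bound the number of boxes added to $\la'$ so that the cost stays $\le k$. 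The extremal case $\la=(1^r)$, $k=r$, is instructive: then $\la'=(k)$, the window forces $\rho$ to be a hook $(\rho_1,1^s)$ with $\rho_1\in\{k,k+1\}$, no such hook occurs in any $\prod_{j\ge2}e_{m_j}[\psi_j]$ at cost below $k$, whereas $\rho=(k,1)=s_{(k,1)}$ does occur in $\psi_{k+1}$ at cost exactly $k$; together with Theorem~\ref{length_bound} ($d_i((1^k))=0$ for $i<k$) this gives $l((1^k))=k$, hence the implication $\la=(1^k)\Rightarrow l=k$.

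The remaining, and hardest, step is strictness: showing $l\le k-1$ for every $\la\ne(1^k)$ with $|\la|=k$, i.e.\ producing an admissible $(\rho,\vec m)$ of cost $\le k-1$. This is a sharp accounting problem: one must understand precisely which partitions occur in the plethysms $e_m[\psi_j]$ and their products, and how tightly they can be made to interlace with $\la'$, and then prove that every non-extremal $\la'$ admits a cover strictly cheaper than the one forced on $(k)$. Low-rank cases are checkable by hand (from $e_1[e_2]=s_{(1,1)}$, $e_2[e_2]=s_{(2,1,1)}$, $\psi_3=s_{(2,1)}$, $\psi_4=s_{(3,1)}+s_{(2,1,1)}$, \dots), but a uniform argument demands genuine control of these plethysms, and I expect this to be the main obstacle. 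A parallel route for the last two steps is to run everything through Chen's formula (Theorem~\ref{polystatformula}): writing $P_\la=\sum_{\vec m}\kappa_{\la,\vec m}\prod_j\binom{X_j}{m_j}$ one has $\sum_i d_i(\la)(-t)^i=(1-t)\sum_{\vec m}\kappa_{\la,\vec m}\prod_j\binom{N_j(t)}{m_j}\big(\tfrac{t^j}{1+t^j}\big)^{m_j}$ with $N_j(t)=\tfrac1j\sum_{d\mid j}\mu(j/d)t^{-d}$, so the conjecture says this power series has $t$-valuation $\le k$, with equality only for $\la=(1^k)$. Since each factor $\binom{N_j(t)}{m_j}\big(\tfrac{t^j}{1+t^j}\big)^{m_j}$ has constant term $\tfrac1{j^{m_j}m_j!}$, the low-degree coefficients are explicit alternating sums $\sum_{\vec m}\kappa_{\la,\vec m}/z_{\vec m}$ (with $z_{\vec m}=\prod_j j^{m_j}m_j!$) and their refinements, and the obstacle becomes showing these sums do not all cancel below degree $k$ --- a delicate non-vanishing question, but now expressed purely in terms of the character polynomial $P_\la$.
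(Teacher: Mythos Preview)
The statement you are attempting to prove is labeled \emph{Conjecture}~\ref{upperbound} in the paper, and the paper does not supply a proof of it. It is listed alongside Conjecture~\ref{nondecconjecture} as an open question suggested by the computational data, and the only results the paper actually proves are Theorems~\ref{boundvanishing}, \ref{asymptotic}, and \ref{length_bound}. So there is no ``paper's own proof'' to compare against.

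As for your proposal itself: it is an outline, not a proof, and you say so explicitly. Your reduction via Lehrer--Solomon and the Pieri/vertical-strip reformulation is reasonable and is in the spirit of the paper's proofs of Theorems~\ref{boundvanishing} and \ref{length_bound}. You correctly extract the case $\la=(1^k)$, where Theorem~\ref{length_bound} gives $d_i((1^k))=0$ for $i<k$ and the hook $s_{(k,1)}$ inside $\psi_{k+1}$ witnesses $d_k((1^k))\neq 0$, so $l((1^k))=k$. But the heart of the conjecture --- that $l(\la)\le k-1$ for every $\la\neq(1^k)$ --- is exactly the part you leave open. You write that one ``must understand precisely which partitions occur in the plethysms $e_m[\psi_j]$ and their products'' and that you ``expect this to be the main obstacle.'' That is the entire content of the strictness claim, and neither the Lehrer--Solomon route nor the alternative via Chen's formula that you sketch at the end supplies the required non-vanishing argument. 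In short, your write-up is a sensible plan of attack and recovers the easy direction, but it does not close the gap, and the paper does not either: the statement remains a conjecture.
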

	
	\begin{rmk}
		Conjectures \ref{nondecconjecture} and \ref{upperbound} together would imply that for any nonempty partition $\la$, $V(\la)$ appears stably (i.e., with multiplicity at least $1$) in $H^i(\pconf(\C);\C)$ for all $i \geq k$. 
	\end{rmk}
	
	The data of $d_i(\la)$ for all $0 \leq i \leq 50$ and $|\la| \leq 23$ as well as the stable decompositions of $H^i(\pconf(\C);\C)$ for $0 \leq i \leq 11$ are available on Github  \cite{github} as well as with an interactive web interface \cite{mywebsite}.
	
	\subsection{Outline}
	In Section \ref{methods} we discuss the equations used to compute $d_i(\la)$. In Section \ref{results} we give a subset of the computational results and contextualize with previous computations. In Section \ref{proofs} we give proofs of Theorems \ref{boundvanishing}, \ref{asymptotic} and \ref{length_bound}. In Appendix \ref{charpolysappendix} we give background on character polynomials and in Appendix \ref{alg-details} we provide implementation details of our algorithm and consider its efficiency. 
	
	\subsection{Acknowledgements}
	We thank Sean Howe for extensive guidance, advice, and moral support throughout the research process. We also thank Sean Howe for thorough review of previous versions of the paper. 
	We thank an anonymous reviewer for the proof of Theorem \ref{length_bound} and many helpful comments on the historical background.
	
	\subsection{Disclosure Statement}
	The authors report there are no competing interests to declare.
	
	\section{Methods} \label{methods}
	
	Church-Ellenberg-Farb \cite{pn-fqt} originally described a connection via the Grothendieck-Lefschetz fixed point formula between twisted cohomology of a variety (topology) and weighted polynomial statistics over a finite field (arithmetic). The stability of $S_n$ representations of $\pconf_n(\C)$ reflects the convergence of polynomial statistics of $\conf_n(\F_q)$, the space of degree $n$ square free polynomials in $\F_q[x]$. Expanding on an observation of Fulman that the weighted polynomial statistics in question could be computed using generating functions \cite{fulman2014}, Chen refined this connection to describe an explicit formula for generating functions $\Big((-1)^iz^i t^n d_{i,n}(\la)\Big)_{i, n \geq 0}$ and $\Big((-1)^iz^id_{i}(\la)\Big)_{i \geq 0}$ in terms of the character polynomial of $V(\la)$. While our algorithm here only relies on their coefficients, we note that these power series have arithmetic meaning when $z$ is replaced with $q^{-1}$ for $q$ a prime power. For a more detailed discussion, see the introduction to Chen \cite{chen2016twisted}. 
	\vspace{.1in}\par
	Our algorithm takes as input a partition $\la$ and positive integer $N$ and returns $\{d_i(\la)\}_{0 \leq i \leq N}$ as the coefficients of a formal power series. It is based upon equations by Macdonald and Chen. 
	For implementation details, see Appendix \ref{alg-details}. Note that the character polynomials $\binom{X}{\rho}$ for $\rho$ a partition are defined in Appendix \ref{charpolysappendix}.
	
	\begin{notation}
		For $\mu$ and $\rho$ both partitions of $n$, 
		$\chi^\mu_\rho$ denotes the character of the irreducible representation given by $\mu$ evaluated on $\rho$, treating $\rho$ as a conjugacy class.
	\end{notation}
	
	\begin{restatable}{theorem}{charpolyformula} \textbf{(Macdonald \cite[ex 1.7.14]{macdonald1998symmetric})} \label{charpolyformula}
		For any partition $\la$, there is a unique character polynomial $\chi^\la$ which agrees with the character of every $V(\la)_n \in V(\la)$. It is given by the equation
		\begin{equation} \label{youngtocharpoly}
			\chi^\la = \sum_{|\rho| \leq |\la|} \binom{X}{\rho}  (-1)^{|\la| - |\rho|} \sum_\mu \chi_\rho^\mu,
		\end{equation}
		where $\mu$ is summed over all partitions $|\mu| = |\rho|$ such that $\la - \mu$ is a \emph{vertical strip}, that is, $\la$ can be obtained by adding $|\la| - |\mu|$ boxes to $\mu$ without adding more than one box in any one row.
	\end{restatable}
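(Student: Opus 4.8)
The plan is to reduce \eqref{youngtocharpoly} to an identity of virtual $S_n$-representations, and then, via the Frobenius characteristic, to a symmetric-function identity that is an instance of Bernstein's creation operator. Write $k=|\la|$. Uniqueness of the character polynomial is standard and formal: a polynomial in the cycle-counting class functions $X_1,X_2,\dots$ that vanishes on $S_n$ for all $n$ in the stable range $n\ge a_1+k$ must be $0$, since the functions $\binom{X}{\rho}$ (over partitions $\rho$) restrict to linearly independent functions on $\bigsqcup_n S_n$ by a triangularity argument with respect to containment of cycle types. So the content is the displayed formula.

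The first step is to unwind $\binom{X}{\rho}$ combinatorially: for $\sigma\in S_n$, the integer $\binom{X}{\rho}$ evaluated at $\sigma$ — which equals $\prod_i\binom{X_i(\sigma)}{m_i}$ with $m_i$ the number of parts of $\rho$ equal to $i$ — counts the $\sigma$-invariant subsets $T\subseteq\{1,\dots,n\}$ such that $\sigma|_T$ has cycle type $\rho$. Interchanging the two sums in \eqref{youngtocharpoly} so that $\mu$ becomes the outer index, the value at $\sigma$ of its right-hand side equals
\[
\sum_{\mu\,:\,\la/\mu\text{ vertical strip}}(-1)^{k-|\mu|}\sum_{\substack{T\subseteq\{1,\dots,n\}\\ \sigma(T)=T,\ |T|=|\mu|}}\chi^\mu(\sigma|_T),
\]
where $\chi^\mu(\sigma|_T)$ is the character $\chi^\mu$ of $S_{|\mu|}$ evaluated at the conjugacy class of $\sigma|_T$. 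By the Frobenius formula for an induced character (the relevant cosets being precisely the $\sigma$-invariant $|\mu|$-subsets), the inner sum is the value at $\sigma$ of the character of $\Ind_{S_{|\mu|}\times S_{n-|\mu|}}^{S_n}(W_\mu\boxtimes\mathbf 1)$, where $W_\mu$ is the irreducible $S_{|\mu|}$-representation with character $\chi^\mu$ and $\mathbf 1$ is the trivial representation. Hence \eqref{youngtocharpoly} is equivalent to the identity of virtual $S_n$-representations
\[
V(\la)_n=\sum_{\mu\,:\,\la/\mu\text{ vertical strip}}(-1)^{k-|\mu|}\,\Ind_{S_{|\mu|}\times S_{n-|\mu|}}^{S_n}\!\bigl(W_\mu\boxtimes\mathbf 1\bigr),\qquad n\ge a_1+k.
\]

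The second step is to apply the Frobenius characteristic $\operatorname{ch}$, a ring homomorphism carrying $V(\la)_n$ to $s_{(n-k,a_1,\dots,a_r)}$ and $\Ind_{S_a\times S_b}^{S_{a+b}}(U\boxtimes U')$ to $\operatorname{ch}(U)\operatorname{ch}(U')$; in particular it carries $\Ind_{S_{|\mu|}\times S_{n-|\mu|}}^{S_n}(W_\mu\boxtimes\mathbf 1)$ to $s_\mu h_{n-|\mu|}$. Thus the previous display becomes the symmetric-function identity
\[
s_{(n-k,\,a_1,\dots,a_r)}=\sum_{\mu\,:\,\la/\mu\text{ vertical strip}}(-1)^{k-|\mu|}\,s_\mu\,h_{n-|\mu|}.
\]
To finish, observe that by the dual Pieri rule $e_i^{\perp}s_\la=\sum_{\mu}s_\mu$, summed over partitions $\mu$ with $\la/\mu$ a vertical strip of size $i$; so the right-hand side is exactly $B_{n-k}(s_\la)$, where $B_m:=\sum_{i\ge0}(-1)^i\,h_{m+i}\,e_i^{\perp}$ is Bernstein's creation operator. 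Bernstein's theorem (see, e.g., \cite{macdonald1998symmetric}) gives $B_m(s_\la)=s_{(m,a_1,\dots,a_r)}$ whenever $m\ge a_1$, which holds here since $n\ge a_1+k$. This proves the identity, and hence the theorem.

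I do not expect a genuinely deep obstacle: the only real mathematical content is the last symmetric-function identity, and once it is recognized as $B_{n-k}(s_\la)$ it is immediate from Bernstein's theorem. The work is in the combinatorial translation — carefully unwinding $\binom{X}{\rho}$, interchanging the sums, and matching the induced character to $s_\mu h_{n-|\mu|}$ — and in checking that the stability bound $n\ge a_1+k$ is exactly what makes the inductions $\Ind_{S_{|\mu|}\times S_{n-|\mu|}}^{S_n}$ meaningful for all $\mu$ appearing and Bernstein's theorem applicable; both match. If one wants a self-contained argument avoiding the vertex operator, the same identity can instead be extracted from the Jacobi--Trudi determinant for $s_{(n-k,a_1,\dots,a_r)}$ by Laplace expansion along its first row, the one fiddly step then being to identify the $j$-th first-row minor with $e_j^{\perp}s_\la=\sum_{\mu\,:\,\la/\mu\text{ vertical strip of size }j}s_\mu$ by a short determinant manipulation.
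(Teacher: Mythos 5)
Your argument is correct, but note that the paper does not actually prove this statement: it is quoted verbatim from Macdonald (Ex.\ I.7.14) and used as a black box, so there is no internal proof to compare against. What you have written is a complete, self-contained derivation, and each step checks out: the count interpretation $\binom{X}{\rho}(\si)=\#\{T\subseteq\{1,\dots,n\}:\si(T)=T,\ \si|_T\text{ has cycle type }\rho\}$ is right; the exchange of sums and the identification of $\sum_{T}\chi^\mu(\si|_T)$ with the character of $\Ind_{S_{|\mu|}\times S_{n-|\mu|}}^{S_n}(W_\mu\boxtimes \mathbf 1)$ is the standard coset form of the induced-character formula (the fixed cosets being the $\si$-invariant $|\mu|$-subsets); applying the Frobenius characteristic turns the claim into
\begin{equation*}
s_{(n-k,a_1,\dots,a_r)}\;=\;\sum_{\mu:\ \la/\mu\ \text{vertical strip}}(-1)^{k-|\mu|}\,s_\mu\,h_{n-|\mu|}\;=\;\sum_{i\ge 0}(-1)^i h_{n-k+i}\,e_i^{\perp}s_\la ,
\end{equation*}
using the dual Pieri rule, and this is exactly Bernstein's creation-operator identity $B_{n-k}(s_\la)=s_{(n-k,\la)}$, valid precisely because the stable range $n\ge a_1+k$ gives $n-k\ge a_1$; your uniqueness argument via linear independence of the $\binom{X}{\rho}$ on large symmetric groups is also fine. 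In effect you have reconstructed the standard proof underlying Macdonald's exercise (equivalently obtainable by Laplace expansion of the Jacobi--Trudi determinant along its first row, as you indicate), which is a genuine gain in self-containedness over the paper's citation-only treatment; the only cost is that you invoke Bernstein's theorem, which is itself an exercise in Macdonald, so if you want the write-up to be fully independent of that source you should carry out the Jacobi--Trudi expansion you sketch at the end.
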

	
	\begin{notation}
		Let $M_{k}(z):= \frac{1}{k}\sum_{d|k}\mu\big(\frac{k}{d}\big) z^d$ denote the $k$th necklace polynomial. For $f$ a Laurent polynomial, define $\binom{f(z)}{k}:=\frac{f(z)(f(z) - 1) \dots (f(z) - k + 1)}{k!}$.
	\end{notation}
	
	\begin{restatable}{theorem}{polystatformula} \textbf{(Chen \cite[eq 2.11]{chen2016twisted})} \label{polystatformula}
		Let $\rho = 1^{j_1}2^{j_2} \dots r^{j_r}$ be a partition. Define the following formal power series in $z$:
		\begin{equation}\label{polystat} 
			\Phi_\rho^{\infty}(z) := (1 - z)\prod_{i=1}^r \binom{M_{i}(z^{-1})}{j_i} (z^{i} - z^{2i} + \dots )^{j_i}.
		\end{equation}
		For $\la$ a partition, let $\chi^\la$ be the character polynomial of $V(\la)$. Suppose $\chi^\la = \sum_{|\rho| \leq |\la|} F^\la_\rho \binom{X}{\rho}$ for coefficients $F^\la_\rho \in \Q$. Then as a formal power series, we have
		\begin{equation}\label{chen_formal_equation}
			\sum_{i \geq 0}(-1)^i d_i(\la) z^i = \sum_{|\rho| \leq |\la|} F^\la_\rho \Phi_\rho^{\infty}(z).
		\end{equation}
		
	\end{restatable}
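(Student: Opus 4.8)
Since \eqref{chen_formal_equation} is Chen's equation \cite[eq.\ 2.11]{chen2016twisted}, the natural route --- the one he, and before him Fulman \cite{fulman2014}, take --- is through the arithmetic of squarefree polynomials over finite fields; here is how I would reconstruct it.

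\emph{Step 1 (twisted point-count identity).} For a prime power $q$, $\conf_n(\F_q)$ is the set of degree-$n$ squarefree monic polynomials over $\F_q$, which is $(\pconf_n/S_n)(\F_q)$. Geometric Frobenius permutes the $n$ roots of $f\in\conf_n(\F_q)$, producing a conjugacy class $\sigma_f\subset S_n$ whose cycle type records the degrees of the monic irreducible factors of $f$. Applying the twisted Grothendieck--Lefschetz formula of Church--Ellenberg--Farb \cite{pn-fqt} to the local system on $\pconf_n/S_n$ attached to $V(\la)_n$ gives
\[
\sum_{f\in\conf_n(\F_q)}\chi^\la(\sigma_f)=\sum_{i\ge 0}(-1)^i\Tr\!\big(\frob_q\mid H^i_c(\pconf_n/S_n;\,\mathcal V_\la)\big),
\]
and combining the transfer isomorphism $H^i_c(\pconf_n/S_n;\mathcal V_\la)\cong(H^i_c(\pconf_n)\otimes V(\la)_n)^{S_n}$, Poincar\'e duality, and the fact that $H^\bullet(\pconf_n(\A^1))$ is of Tate type with $\frob_q$ acting on $H^j$ by $q^{j}$ (it is a hyperplane-arrangement complement, spanned in cohomology by the classes of $d\log(x_a-x_b)$), this rearranges to
\[
q^{-n}\sum_{f\in\conf_n(\F_q)}\chi^\la(\sigma_f)=\sum_{i\ge 0}(-1)^i q^{-i}\,d_{i,n}(\la).
\]

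\emph{Step 2 (stable limit).} Representation stability \cite{church2013representation}, with the explicit stable range $n\ge 3i+1$ of Hersh--Reiner \cite{hersh2017representation}, gives $d_{i,n}(\la)=d_i(\la)$ for $n\gg 0$; with uniform (polynomial in $n$) bounds on $d_{i,n}(\la)$ one may pass to $n\to\infty$ and obtain $\lim_n q^{-n}\sum_f\chi^\la(\sigma_f)=\sum_{i\ge 0}(-1)^i q^{-i}d_i(\la)$. Writing $z=q^{-1}$, it remains to show this limit equals $\sum_{|\rho|\le|\la|}F^\la_\rho\,\Phi^\infty_\rho(z)$; since the left side is linear in $\chi^\la=\sum_\rho F^\la_\rho\binom{X}{\rho}$, we may treat each $\binom{X}{\rho}$ separately.

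\emph{Step 3 (Euler product and residue).} Because $\binom{X}{\rho}(\sigma)=\prod_i\binom{m_i(\sigma)}{j_i}$ counts the ways to select $j_i$ of the $i$-cycles of $\sigma$, and a squarefree monic polynomial is an arbitrary finite set of monic irreducibles --- with exactly $M_i(q)$ of degree $i$ (Gauss) --- the bigenerating function is an Euler product whose degree-$i$ factor is $\sum_{k\ge 0}\binom{M_i(q)}{k}\binom{k}{j_i}t^{ik}=\binom{M_i(q)}{j_i}t^{ij_i}(1+t^i)^{M_i(q)-j_i}$ (by $\binom{M}{k}\binom{k}{j}=\binom{M}{j}\binom{M-j}{k-j}$), so using the zeta identity $\prod_i(1+t^i)^{M_i(q)}=\frac{1-qt^2}{1-qt}$,
\[
\sum_{n\ge 0}t^n\sum_{f\in\conf_n(\F_q)}\binom{X}{\rho}(\sigma_f)=\frac{1-qt^2}{1-qt}\prod_{i=1}^r\binom{M_i(q)}{j_i}\Big(\frac{t^i}{1+t^i}\Big)^{j_i}.
\]
This has a simple pole at $t=q^{-1}$, so $\lim_n q^{-n}\sum_f\binom{X}{\rho}(\sigma_f)=\lim_{t\to q^{-1}}(1-qt)\cdot(\text{right side})$; substituting $t=q^{-1}=z$ sends $1-qt^2\mapsto 1-z$, $M_i(q)\mapsto M_i(z^{-1})$, and $\tfrac{t^i}{1+t^i}\mapsto z^i-z^{2i}+\cdots$, which is exactly $\Phi^\infty_\rho(z)$ of \eqref{polystat}. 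Summing against the $F^\la_\rho$ yields \eqref{chen_formal_equation}; since both sides are rational in $z$ and agree at the infinitely many points $z=q^{-1}$, they agree as formal power series.

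\emph{Main obstacle.} Step 3 is a routine if finicky generating-function computation; the substance lies in Steps 1--2 --- obtaining the twisted trace formula with Tate twists that correctly match powers of $q$ to cohomological degree (this rests on the purity/Tate-type structure of $H^\bullet(\pconf_n(\A^1))$), and justifying that the limit $n\to\infty$ commutes with the infinite alternating sum, which is precisely where representation stability and the uniform bounds on $d_{i,n}(\la)$ enter.
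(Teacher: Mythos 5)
The paper does not give its own proof of this statement: it is quoted directly from Chen \cite[eq.\ 2.11]{chen2016twisted}, whose derivation---via the twisted Grothendieck--Lefschetz point-counting formula of Church--Ellenberg--Farb \cite{pn-fqt} and Fulman-style Euler-product generating functions \cite{fulman2014}---is exactly the route you reconstruct. Your outline is correct, including the two points you rightly flag as carrying the real weight (the Tate-type/purity input that matches powers of $q$ to cohomological degree, and the uniform bounds from \cite{pn-fqt} needed to interchange the $n\to\infty$ limit with the alternating sum), so your proposal matches the proof of the cited source rather than offering a different argument.
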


	\section{Results} \label{results}
	\subsection{Example Computational Results}
	The following table contains the data of the generating function $(-1)^iz^id_i(\la)$ computed up to $z^{30}$ with $\la$ given by the Young diagram in the left column. In the \texttt{results.csv} file available at \cite{github}, 
	this data is extended to include $d_i(\la)$ for all $\la,i$ with $|\la| \leq 23$ and $0 \leq i \leq 50$, which is almost entirely novel.  
	
	\newcolumntype{Y}[1]{>{\centering\arraybackslash$}m{#1}<{$}} 
	
	\renewcommand{\arraystretch}{1.5}
	\begin{center}
		\begin{tabular}{|Y{3cm}|Y{10cm}|}\hline
			\text{Young Diagram} & \text{Formal power series $(-1)^id_i(\la)z^i$ computed up to $z^{30}$} \\
			\hline
			\hline
			\text{Trivial} & 
			1 - z
			\\
			\hline
			\yng(1) & 
			-z^{1} + 2z^{2} - 2z^{3} + 2z^{4} - 2z^{5} + 2z^{6} - 2z^{7} + 2z^{8} - 2z^{9} + 2z^{10} - 2z^{11} + 2z^{12} - 2z^{13} + 2z^{14} - 2z^{15} + 2z^{16} - 2z^{17} + 2z^{18} - 2z^{19} + 2z^{20} - 2z^{21} + 2z^{22} - 2z^{23} + 2z^{24} - 2z^{25} + 2z^{26} - 2z^{27} + 2z^{28} - 2z^{29} + 2z^{30} + \dots
			\\
			\hline
			\yng(2) &
			-z^{1} + 2z^{2} - 3z^{3} + 6z^{4} - 9z^{5} + 10z^{6} - 11z^{7} + 14z^{8} - 17z^{9} + 18z^{10} - 19z^{11} + 22z^{12} - 25z^{13} + 26z^{14} - 27z^{15} + 30z^{16} - 33z^{17} + 34z^{18} - 35z^{19} + 38z^{20} - 41z^{21} + 42z^{22} - 43z^{23} + 46z^{24} - 49z^{25} + 50z^{26} - 51z^{27} + 54z^{28} - 57z^{29} + 58z^{30} + \dots
			\\
			\hline
			\yng(1,1) &
			2z^{2} - 5z^{3} + 6z^{4} - 7z^{5} + 10z^{6} - 13z^{7} + 14z^{8} - 15z^{9} + 18z^{10} - 21z^{11} + 22z^{12} - 23z^{13} + 26z^{14} - 29z^{15} + 30z^{16} - 31z^{17} + 34z^{18} - 37z^{19} + 38z^{20} - 39z^{21} + 42z^{22} - 45z^{23} + 46z^{24} - 47z^{25} + 50z^{26} - 53z^{27} + 54z^{28} - 55z^{29} + 58z^{30} + \dots
			\\
			\hline
			\yng(3) &
			z^{2} - 4z^{3} + 8z^{4} - 14z^{5} + 24z^{6} - 35z^{7} + 46z^{8} - 61z^{9} + 79z^{10} - 97z^{11} + 117z^{12} - 140z^{13} + 165z^{14} - 192z^{15} + 220z^{16} - 250z^{17} + 284z^{18} - 319z^{19} + 354z^{20} - 393z^{21} + 435z^{22} - 477z^{23} + 521z^{24} - 568z^{25} + 617z^{26} - 668z^{27} + 720z^{28} - 774z^{29} + 832z^{30} + \dots
			\\
			\hline
			\yng(2,1) &
			2z^{2} - 7z^{3} + 16z^{4} - 30z^{5} + 47z^{6} - 68z^{7} + 94z^{8} - 123z^{9} + 156z^{10} - 194z^{11} + 235z^{12} - 280z^{13} + 330z^{14} - 383z^{15} + 440z^{16} - 502z^{17} + 567z^{18} - 636z^{19} + 710z^{20} - 787z^{21} + 868z^{22} - 954z^{23} + 1043z^{24} - 1136z^{25} + 1234z^{26} - 1335z^{27} + 1440z^{28} - 1550z^{29} + 1663z^{30} + \dots
			\\
			\hline
		\end{tabular}
	\end{center}

	\subsection{Fixed Degree of Cohomology} \label{fixeddegree}
	An open question in representation stability referenced by Farb \cite[pg. 3]{farbrepstability} is to express the stable cohomology of $H^i(\pconf(M);\C)$ for a manifold $M$ and fixed $i$ as a decomposition of irreducible representations. Here we consider the case $M = \C$. Based on the bound of degree in Theorem \ref{boundvanishing} and our computational results, we compute these decompositions for $0 \leq i \leq 11$. The data of $i \geq 6$ is available through an interactive web interface and as a \texttt{.csv} file at \cite{mywebsite} or with code on GitHub \cite{github}. In the following, let $V(0)$ denote the family of trivial representations.
	
	$$H^0(\pconf(\C);\C) \cong V(0).$$

	$$H^1(\pconf(\C);\C) \cong V(0) \oplus V(1) \oplus V(2).$$

	$$H^2(\pconf(\C);\C) \cong V(1)^{\oplus 2} \oplus V(2)^{\oplus 2} \oplus V(1,1)^{\oplus 2} \oplus V(3) \oplus V(2,1)^{\oplus 2} \oplus V(3,1).$$

	$$H^3(\pconf(\C);\C) \cong V(1)^{\oplus 2} \oplus V(2)^{\oplus 3} \oplus V(1,1)^{\oplus 5} \oplus V(3)^{\oplus 4} \oplus V(2,1)^{\oplus 7} \oplus V(1,1,1)^{\oplus 3}$$ 
	$$\oplus V(4)
	\oplus V(3,1)^{\oplus 6} \oplus V(2,2)^{\oplus 2} \oplus V(2,1,1)^{\oplus 4} \oplus V(4,1)^{\oplus 2} \oplus V(3,2)^{\oplus 2}$$ 
	$$\oplus V(3,1,1)^{\oplus 2} \oplus V(2,2,1) \oplus V(4,1,1) \oplus V(3,3).$$

	$$H^4(\pconf(\C);\C) \cong V(1)^{\oplus 2} \oplus V(2)^{\oplus 6} \oplus V(1,1)^{\oplus 6} \oplus V(3)^{\oplus 8} \oplus V(2,1)^{\oplus 16}$$
	$$ \oplus V(1,1,1)^{\oplus 9} \oplus V(4)^{\oplus 6} \oplus V(3,1)^{\oplus 19} \oplus V(2,2)^{\oplus 12} \oplus V(2,1,1)^{\oplus 17}$$
	$$ \oplus V(1,1,1,1)^{\oplus 5} \oplus V(5)^{\oplus 2} \oplus V(4,1)^{\oplus 12} \oplus V(3,2)^{\oplus 14} \oplus V(3,1,1)^{\oplus 16} $$
	$$ \oplus V(2,2,1)^{\oplus 10} \oplus V(2,1,1,1)^{\oplus 7} \oplus V(5,1)^{\oplus 3} \oplus V(4,2)^{\oplus 7} \oplus V(4,1,1)^{\oplus 8} \oplus V(3,3)^{\oplus 4}$$
	$$ \oplus V(3,2,1)^{\oplus 9} \oplus V(3,1,1,1)^{\oplus 5} \oplus V(2,2,2)^{\oplus 2} \oplus V(2,2,1,1)^{\oplus 2} \oplus V(5,2) \oplus V(5,1,1)^{\oplus 2}$$
	$$ \oplus V(4,3)^{\oplus 2} \oplus V(4,2,1)^{\oplus 3} \oplus V(4,1,1,1)^{\oplus 2} \oplus V(3,3,1)^{\oplus 2} \oplus V(3,2,2) \oplus V(3,2,1,1)$$
	$$ \oplus V(5,1,1,1) \oplus V(4,3,1).$$

	$$H^5(\pconf(\C);\C) \cong V(1)^{\oplus 2} \oplus V(2)^{\oplus 9} \oplus V(1,1)^{\oplus 7} \oplus V(3)^{\oplus 14} \oplus V(2,1)^{\oplus 30}$$
	$$ \oplus V(1,1,1)^{\oplus 15} \oplus V(4)^{\oplus 17} \oplus V(3,1)^{\oplus 46} \oplus V(2,2)^{\oplus 34} \oplus V(2,1,1)^{\oplus 45}$$
	$$ \oplus V(1,1,1,1)^{\oplus 17} \oplus V(5)^{\oplus 10} \oplus V(4,1)^{\oplus 43} \oplus V(3,2)^{\oplus 53} \oplus V(3,1,1)^{\oplus 62} $$
	$$ \oplus V(2,2,1)^{\oplus 47} \oplus V(2,1,1,1)^{\oplus 36} \oplus V(1,1,1,1,1)^{\oplus 7} \oplus V(6)^{\oplus 3} \oplus V(5,1)^{\oplus 22}$$
	$$ \oplus V(4,2)^{\oplus 45} \oplus V(4,1,1)^{\oplus 44} \oplus V(3,3)^{\oplus 20} \oplus V(3,2,1)^{\oplus 66} \oplus V(3,1,1,1)^{\oplus 39}$$
	$$ \oplus V(2,2,2)^{\oplus 18} \oplus V(2,2,1,1)^{\oplus 25} \oplus V(2,1,1,1,1)^{\oplus 12} \oplus V(6,1)^{\oplus 5} \oplus V(5,2)^{\oplus 17} $$
	$$  \oplus V(5,1,1)^{\oplus 19} \oplus V(4,3)^{\oplus 19} \oplus V(4,2,1)^{\oplus 41} \oplus V(4,1,1,1)^{\oplus 23} \oplus V(3,3,1)^{\oplus 23}$$
	$$ \oplus V(3,2,2)^{\oplus 19} \oplus V(3,2,1,1)^{\oplus 28} \oplus V(3,1,1,1,1)^{\oplus 9} \oplus V(2,2,2,1)^{\oplus 7} \oplus V(2,2,1,1,1)^{\oplus 5}$$
	$$ \oplus V(6,2)^{\oplus 3} \oplus V(6,1,1)^{\oplus 3} \oplus V(5,3)^{\oplus 5} \oplus V(5,2,1)^{\oplus 12} \oplus V(5,1,1,1)^{\oplus 9}$$
	$$ \oplus V(4,4)^{\oplus 5} \oplus V(4,3,1)^{\oplus 14} \oplus V(4,2,2)^{\oplus 10} \oplus V(4,2,1,1)^{\oplus 12} \oplus V(4,1,1,1,1)^{\oplus 5} \oplus V(3,3,2)^{\oplus 5}  $$
	$$ \oplus V(3,3,1,1)^{\oplus 8} \oplus V(3,2,2,1)^{\oplus 5} \oplus V(3,2,1,1,1)^{\oplus 3} \oplus V(2,2,2,2) \oplus V(6,2,1) \oplus V(6,1,1,1)^{\oplus 2}$$
	$$ \oplus V(5,4) \oplus V(5,3,1)^{\oplus 3} \oplus V(5,2,2) \oplus V(5,2,1,1)^{\oplus 3} \oplus V(5,1,1,1,1)^{\oplus 2} \oplus V(4,4,1)^{\oplus 2}  $$
	$$ \oplus V(4,3,2)^{\oplus 3} \oplus V(4,3,1,1)^{\oplus 3} \oplus V(4,2,2,1) \oplus V(4,2,1,1,1) \oplus V(3,3,2,1) \oplus V(6,1,1,1,1)$$
	$$\oplus V(5,3,1,1) \oplus V(4,4,2).$$
	
	\subsection{Comparison with Existing Data}\label{past_data}
	
	Most prior computations of the stable coefficients $d_i(\la)$ have been limited to small $i$ or small $\la$: Church-Ellenberg-Farb computed $d_i(\la)$ for $\la = (1)$ \cite[Proposition 4.5]{pn-fqt} and $\la = (1,1)$ \cite[pg. 38]{pn-fqt} as functions of $i$, which Chen reproduced along with $d_i((2))$ \cite[Example 3]{chen2016twisted}. Farb included the stable decomposition of $H^4(\pconf_n(\C);\C)$ as an $S_n$ representation in \cite[pg. 9]{farbrepstability}, thus computing $d_4(\la)$ for all $\la$. 
	\vspace{.1in}\par
	A larger source of prior computations comes from before the notion of representation stability (in the sense of Church-Farb) was introduced.
	Bergstr\"{o}m computed $H^i(\mathcal{M}_{0,n})$ as a sum of irreducible $S_n$ representations, where $\mathcal{M}_{0,n}$ is the moduli space of $n$-pointed genus zero curves \cite{bergstrom_data}, based on formulas of Getzler \cite{getzler1995operads} for $n \leq 22$. See the introduction to \cite{bergstrom2007cohomology} for Bergstr\"{o}m's discussion of these computations and extensions to higher genus moduli spaces. This data is relevant to our computations since there is an $S_n$-equivariant homotopy equivalence \cite[Corollary 3.10]{getzler1995operads}
	$$\pconf_n(\C) \cong S^1 \times \mathcal{M}_{0, n+1}.$$
	Here $S_n$ acts trivially on $S^1$ and acts on $\mathcal{M}_{0,n+1}$ via the inclusion $S_n \hookrightarrow S_{n+1}$ by $(ij) \mapsto ((i+1)(j+1))$. 
	In particular, this implies by the K\"{u}nneth formula we have an isomorphism of $S_n$ representations:
	\begin{equation}
		\label{moduli_space_comparison_with_pconf}
		H^i(\pconf_n(\C)) \cong \Res^{S_{n+1}}_{S_n}H^i(\mathcal{M}_{0, n+1}) \oplus \Res^{S_{n+1}}_{S_n}H^{i-1}(\mathcal{M}_{0,n+1}).
	\end{equation}
	Using the branching rule for restriction of $S_n$ representations \cite[\S 7, Corollary 3.1]{fulton1997young}, we can extract the unstable coefficients $d_{i,n}(\la)$ for $n \leq 21$ from Bergstr\"{o}m's data. 
	Combined with Hersh-Reiner's sharp bound \cite{hersh2017representation} that $d_{i,n}(\la)$ is constant for $n \geq 3i+1$, Bergstr\"{o}m's computations yield the coefficients $d_i(\la)$ for $i \leq 6$ and a partial description of the coefficients $d_7(\la)$. 
	\vspace{.1in}\par
	As part of our work, we compare each of the above sources of data with our own and confirm that they all agree. We also remark that using Chen's \textit{unstable} formulas \cite[Theorem 1]{chen2016twisted}, it would be possible to modify our work to compute the unstable coefficients $d_{i,n}(\la)$, which would offer a more detailed comparison with Bergstr\"{o}m's results. 
	
	\section{Proofs of Theorems} \label{proofs}
	
	\subsection{Bound on Vanishing Multiplicity}
	In this subsection we prove Theorem \ref{boundvanishing}.
	First we prove the following two Lemmas:
	\begin{enumerate}
		\item[\textbf{\ref{dimh_i}.}] There is a polynomial $f_i(n)$ in $n$ of degree $2i$ such that $f_i(n) = \dim H^i(\pconf_n(\C);\C)$ for all integers $n \geq 2i$.
		\item[\textbf{\ref{dimV(la)_n}.}] Let $\la$ be a partition of $k$. $\dim V(\la)_n$ is given by a polynomial $g_\la(n)$ in $n$ of degree $k$ for $n \geq k$. 
	\end{enumerate}
	
	Given these results, suppose that $d_i(\la)$ is non-zero for some $i \in \N$ and $\la$ a partition of $k$. Then by definition, $V(\la)_n$ appears as a direct summand of $H^i(\pconf_n(\C);\C)$ for all $n \gg 0$. In particular, we must have $\dim V(\la)_n \leq \dim H^i(\pconf_n(\C);\C)$ for all $n \gg 0$, so $g_\la(n) \leq f_i(n)$ for all $n \gg 0$. Therefore since both sides are polynomials in $n$ (with positive leading coefficients), we have $\deg g_\la \leq \deg f_i$, so $k \leq 2i$ proving Theorem \ref{boundvanishing}.
	
	\begin{lemma} \label{dimh_i}
		There is a polynomial $f_i(n)$ in $n$ of degree $2i$ such that $f_i(n) = \dim H^i(\pconf_n(\C);\C)$ for all integers $n \geq 2i$.
	\end{lemma}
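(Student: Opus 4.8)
The plan is to reduce the statement to a classical fact about elementary symmetric functions, equivalently about Stirling numbers of the first kind. Recall Arnol'd's computation of the cohomology of $\pconf_n(\C)$ \cite{Arnold1969} (equivalently, the Lehrer--Solomon description \cite{lehrersolomon}): the Poincar\'e polynomial is
$$\sum_{i \ge 0} \dim H^i(\pconf_n(\C);\C)\, t^i \;=\; \prod_{k=1}^{n-1}(1 + kt).$$
Extracting the coefficient of $t^i$ shows that $\dim H^i(\pconf_n(\C);\C)$ equals the elementary symmetric polynomial $e_i(1,2,\dots,n-1)$, which is also the unsigned Stirling number of the first kind $c(n,\,n-i)$ counting permutations of $n$ letters with exactly $n-i$ cycles. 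It therefore suffices to exhibit a polynomial of degree exactly $2i$ that agrees with $m \mapsto e_i(1,\dots,m)$ for all $m$ above some threshold; substituting $m = n-1$ then yields $f_i$, and one checks the threshold translates to something no larger than $n \ge 2i$.

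I would establish this by induction on $i$ from the Pascal-type recursion $c(n,k) = c(n-1,k-1) + (n-1)\,c(n-1,k)$. Setting $Q_i(n) := c(n,\,n-i)$ and $k = n-i$, it reads
$$Q_i(n) \;=\; Q_i(n-1) + (n-1)\,Q_{i-1}(n-1),$$
valid for $n \ge i+1$, with $Q_0 \equiv 1$ and, e.g., $Q_i(i+1) = c(i+1,1) = i!$ as an anchor value. Assume inductively that $Q_{i-1}$ agrees with a polynomial of degree $2(i-1)$ with positive leading coefficient. Then the forward difference $Q_i(n) - Q_i(n-1)$ agrees with the polynomial $(n-1)\,Q_{i-1}(n-1)$, which has degree $2i-1$ and positive leading coefficient. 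Since antidifferencing a polynomial raises its degree by exactly one, preserves positivity of the leading coefficient, and leaves only an additive constant of freedom — pinned down here by $Q_i(i+1) = i!$ — the function $Q_i$ agrees with a polynomial of degree exactly $2i$ for all $n \ge i$. The base case $Q_1(n) = c(n,n-1) = \binom{n}{2}$ is immediate, so the induction goes through.

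Taking $f_i$ to be the polynomial produced above, it has degree $2i$ and satisfies $f_i(n) = \dim H^i(\pconf_n(\C);\C)$ for all $n \ge i+1$ (and $f_0 \equiv 1$ works for all $n \ge 1$), which in particular covers $n \ge 2i$ since $2i \ge i+1$ for $i \ge 1$. The only point requiring any care is this bookkeeping of the range of validity: the recursion, hence the polynomial identity, is only asserted once all the Stirling entries appearing are honest counts, so one must confirm $n \ge i+1$ suffices, which it does. If one prefers to avoid the Stirling recursion, the same conclusion follows by expanding $e_i(1,\dots,m)$ via Newton's identities in terms of the power sums $p_j(1,\dots,m) = 1^j + \cdots + m^j$: each $p_j$ is a polynomial in $m$ of degree $j+1$ by Faulhaber's formula, the term $e_{i-1}p_1$ contributes the degree-$2i$ part, and no cancellation occurs because every leading coefficient in sight is positive. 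No substantial obstacle arises — the real content is Arnol'd's product formula together with an elementary induction.
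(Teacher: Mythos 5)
Your argument is correct, but it is a genuinely different route from the one in the paper. The paper works directly from the Lehrer--Solomon $S_n$-equivariant decomposition: it writes $\dim H^i(\pconf_n(\C);\C)$ as $\sum_{\mu} n!/\prod_j \mu_j!\, j^{\mu_j}$ over partitions $\mu$ of $n$ with exactly $n-i$ parts, exhibits an explicit bijection between that index set and the partitions of $i$ valid precisely when $n \geq 2i$ (this is where the threshold $n \geq 2i$ in the statement comes from), and then observes that each summand is a polynomial in $n$ of degree $i + \sum_j \la_j \leq 2i$, with the maximum attained only at $\la = 1^i$; this yields the polynomial, its degree, and even its leading coefficient $1/(2^i i!)$ in closed form, and keeps the bookkeeping aligned with the same Lehrer--Solomon formula used later for Theorem~\ref{length_bound}. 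You instead invoke Arnol'd's Poincar\'e polynomial $\prod_{k=1}^{n-1}(1+kt)$ to identify the Betti number with $e_i(1,\dots,n-1) = c(n,n-i)$ and then run an induction on $i$ via the recursion $c(n,k) = c(n-1,k-1)+(n-1)c(n-1,k)$, antidifferencing at each step; this is exactly the ``elementary'' alternative the paper's own remark after the lemma acknowledges but does not carry out. Your route is more elementary and even gives agreement on a larger range than $n \geq 2i$ (your threshold bookkeeping is a bit loose between $n \geq i$ and $n \geq i+1$, but either covers $n \geq 2i$ for $i \geq 1$, with $i=0$ trivial), at the cost of producing the polynomial only implicitly as an iterated antidifference rather than as an explicit sum over partitions of $i$. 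One small quibble in your Newton's-identities aside: the identity $i e_i = \sum_{j\geq 1}(-1)^{j-1}e_{i-j}p_j$ has alternating signs, so ``no cancellation because every leading coefficient is positive'' is not quite the right justification; the correct (and still immediate) reason is that the $j=1$ term is the unique one of degree $2i$, all others having degree at most $2i-1$. This does not affect your main induction, which is sound.
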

	
	\begin{rmk}
		Arnol'd determined the Betti number $\dim H^i(\pconf_n(\C);\C)$ to be $s(n, n-i)$, where $s(a,b)$ denotes the $(a,b)$th Stirling number of the first kind \cite{Arnold1969}. One can prove Equation \ref{dim_formula} for $s(n, n-i)$ using elementary methods (for instance as shown here \cite[eq 55]{malenfant2011finiteclosedformexpressionspartition}). We use Lehrer and Solomon's description \cite{lehrersolomon} of $H^i(\pconf_n(\C);\C)$ as an $S_n$ representation to recover Equation \ref{dim_formula} but remark that the use of this sophisticated result is not necessary.
	\end{rmk}
	
	\begin{notation}
		A partition $\la = (a_1, \dots, a_s)$ of $n$ may be written in \textit{cycle notation} $\la = 1^{\la_1} 2^{\la_2} \dots r^{\la_r}$ where $\la_i$ is the number of occurrences of $i$ in $(a_1, \dots, a_s)$. For example the cycle notation of the partition $4+2+2+1 = (4,2,2,1)$ is $1^12^23^04^1$, or just $1^12^24^1$. 
	\end{notation}
	
	\begin{proof}
		Let $P(n)$ denote the set of partitions of $n$ and let $T(i,n) \subset P(n)$ denote the partitions $\mu$ of $n$ with exactly $n-i$ terms:
		\begin{equation*}\label{def_of_T}
			T(i,n) := \bigg\{1^{\mu_1} \dots r^{\mu_r} \ \bigg| \ \sum_j j\mu_j = n, \sum_j \mu_j = n-i\bigg\}.
		\end{equation*}
		Subtracting the linear equations defining $T(i,n)$, equivalently we have
		$$T(i,n) = \bigg\{1^{\mu_1} \dots r^{\mu_r} \ \bigg| \ \sum_j (j-1)\mu_j = i, \mu_1 = n-i - \sum_{j=2}^r \mu_j \bigg\}.$$
		In particular, an element $\mu = 1^{\mu_1} \dots r^{\mu_r}$ of $T(i,n)$ is determined by $\mu_2, \dots, \mu_r$ since $\mu_1$ must satisfy $\mu_1 = n - i - \sum_{j=2}^r \mu_j$. Furthermore, notice that the equation $\sum_j (j-1)\mu_j = i$ means that we have a function 
		$$T(i,n) \xrightarrow{\phi} P(i) \hspace{.2in}1^{\mu_1} \dots r^{\mu_r} \mapsto 1^{\mu_2} \dots (r-1)^{\mu_r}.$$ The function $\phi$ is an injection since $\mu \in T(i,n)$ is determined by $\mu_2, \dots, \mu_r$. Furthermore, $\la = 1^{\la_1} \dots r^{\la_r} \in P(i)$ is in the image of $\phi$ if and only if $n - i - \sum_{j} \la_j \geq 0$. Since $\la \in P(i)$ satisfies
		$$\sum_j \la_j \leq \sum_j j \la_j = i,$$
		we have $n - i - \sum_{j} \la_j \geq n - 2i$ for any $\la \in P(i)$. Therefore,
		the map $\phi:T(i,n) \to P(i)$ is a bijection for $n \geq 2i$.
		\vspace{.1in}\par
		By Lehrer-Solomon's computation of $H^i(\pconf_n(\C);\C)$ as an $S_n$ representation \cite[eq. 1.3]{lehrersolomon},
		$$H^i(\pconf_n(\C);\C) \cong \bigoplus_{\mu \in T(i,n)} \Ind_{Z(c_\mu)}^{S_n} (\xi_\mu),$$
		where $c_\mu$ is any element of $\mu$ and $\xi_\mu$ is a one dimensional representation of $Z(c_\mu)$.
		Thus we have:
		$$\dim(H^i(\pconf_n(\C);\C)) = \sum_{\mu \in T(i,n)} \dim \Ind_{Z(c_\mu)}^{S_n} (\xi_\mu) = \sum_{\mu \in T(i,n)} [S_n : Z(c_\mu)].$$
		Write $\mu$ in cycle notation $1^{\mu_1} 2^{\mu_2} \dots r^{\mu_r}$. Recall that the centralizer of $c_\mu$ in $S_n$ can be expressed as:
		$$Z(c_\mu) = S_{\mu_1} \rtimes \bigg((\Z/2)^{\mu_2} \times S_{\mu_2}\bigg) \dots \bigg((\Z/r)^{\mu_r} \rtimes S_{\mu_r}\bigg).$$
		And in particular,
		$$|Z(c_\mu)| = \prod_j \mu_j! j^{\mu_j}.$$
		Therefore,
		\begin{equation} \label{dim_formula}
			\dim(H^i(\pconf_n(\C);\C)) =\sum_{\mu \in T(i,n)} [S_n : Z(c_\mu)] = \sum_{\mu \in T(i,n)} \frac{n!}{\prod_j \mu_j! j^{\mu_j}} .
		\end{equation}
		Now assume that $n \geq 2i$, so $\phi:T(i,n) \hookrightarrow P(i)$ is a bijection. Then after identifying $T(i,n)$ and $P(i)$ via $\phi$, we have
		$$\dim(H^i(\pconf_n(\C);\C)) = \sum_{\la \in P(i)} \frac{1}{\prod_{j=1}^n \la_j! (j+1)^{\la_j}} \frac{n!}{(n - i - \sum_{j} \la_j)!}.$$
		The advantage of this identification is that now the sum is over $P(i)$ and thus independent of $n$. 
		Now fix some partition $\la = 1^{\la_1} \dots r^{\la_r}$ of $i$. The expression
		\begin{equation}\label{dim_expression}
			\frac{1}{\prod_{j=1}^n \la_j! (j+1)^{\la_j}} \frac{n!}{(n - i - \sum_{j} \la_j)!}
		\end{equation}
		is a polynomial in $n$ (for $n \geq 2i \geq i + \sum_j \la_j $) of degree $i + \sum_j \la_j$ with leading coefficient $\frac{1}{\prod_{j=1}^n \la_j! (j+1)^{\la_j}}$. Therefore, $\dim (H^i(\pconf_n(\C);\C))$ is given by a polynomial $f_i(n)$ in $n$ for all $n \geq 2i$. Furthermore, the degree of $f_i(n)$ is equal to the maximum value of $i + \sum_j \la_j$ among $\la \in P(i)$. This value is maximized when $\la = 1^{i}$, in which case the degree of Expression \ref{dim_expression} as a polynomial in $n$ is exactly $2i$. Therefore for $n \geq 2i$, $\dim (H^i(\pconf_n(\C);\C))$ is given by a polynomial in $n$ of degree $2i$.
	\end{proof}
	
	\begin{lemma} \label{dimV(la)_n}
		Let $\la = (a_1, \dots, a_r)$ be a Young diagram with $k$ boxes. Then $\dim V(\la)_n$ is a degree $k$ polynomial $g_\la(n)$ in $n$. Furthermore, the leading coefficient of $g_\la(n)$ is $\frac{\dim \la}{k!}$, where $\dim \la$ is the dimension of the irreducible representation given by $\la$.
	\end{lemma}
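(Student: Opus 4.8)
The plan is to reduce the statement to an explicit closed formula for the dimension of an irreducible $S_n$-representation. By definition $V(\la)_n$ is the irreducible indexed by the partition $\tilde\la=(n-k,a_1,\dots,a_r)$ of $n$, legitimate once $n-k\ge a_1$, so it suffices to show that the dimension of this irreducible is, as a function of $n$, a polynomial of degree $k$ with leading coefficient $\dim\la/k!$ (a polynomial identity valid for all $n$ once it holds for the infinitely many $n$ where $V(\la)_n$ is defined). I would apply the Frobenius determinantal dimension formula to $\tilde\la$, written with $r+1$ parts: setting $\ell_i=\tilde\la_i+(r+1)-i$ one gets $\ell_1=n-k+r$, while the remaining shifted parts $b_i:=\ell_{i+1}=a_i+r-i$ for $1\le i\le r$ do not depend on $n$, and
\begin{equation*}
	\dim V(\la)_n \;=\; n!\,\frac{\prod_{j=1}^{r}(\ell_1-b_j)\,\prod_{1\le i<j\le r}(b_i-b_j)}{\ell_1!\,\prod_{j=1}^{r}b_j!}.
\end{equation*}

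The next step is to recognize that the same Frobenius formula, applied now to $\la\vdash k$ itself (with $r$ parts, whose shifted parts are exactly the $b_i$), gives $\dim\la=k!\,\dfrac{\prod_{1\le i<j\le r}(b_i-b_j)}{\prod_{j=1}^{r}b_j!}$. Substituting this identity and using $\ell_1-b_j=n-k+j-a_j$ leaves
\begin{equation*}
	\dim V(\la)_n \;=\; \frac{\dim\la}{k!}\cdot\frac{n!}{(n-k+r)!}\cdot\prod_{j=1}^{r}(n-k+j-a_j).
\end{equation*}
Here $\dfrac{n!}{(n-k+r)!}=n(n-1)\cdots(n-k+r+1)$ is a genuine polynomial in $n$ — a falling factorial with $k-r$ factors, monic of degree $k-r$, using $r\le k$ — and $\prod_{j=1}^{r}(n-k+j-a_j)$ is monic of degree $r$; their product is monic of degree $k$. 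Hence $g_\la(n)$ is a polynomial of degree $k$ whose leading coefficient is $\dim\la/k!$, which is the assertion.

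The one point requiring care, and the step I'd flag as the only real (if minor) obstacle, is to display the $n$-dependent factor in manifestly polynomial form: writing it as $\tfrac{n!}{(n-k+r)!}\prod_j(\cdots)$ rather than as a ratio of two unrelated polynomials is what makes clear that no cancellation is being concealed, and it simultaneously pins down the leading coefficient. (If one prefers the hook length formula instead, the analogous subtlety is that the hook lengths of the cells of $\tilde\la$ lying outside its first row coincide with the hook lengths of $\la$, contributing $k!/\dim\la$, whereas the first-row hook lengths contribute $(n-k-a_1)!$ times $a_1$ linear factors in $n$ which one must check are distinct and together divide $n!/(n-k-a_1)!$ — the same computation in a different guise.) Beyond this bookkeeping I expect the argument to be routine.
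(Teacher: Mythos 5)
Your proof is correct, but it takes a genuinely different route from the paper. You work directly with the Frobenius determinantal (equivalently, hook length) dimension formula applied to the padded partition $(n-k,a_1,\dots,a_r)$, factor out the corresponding formula for $\la$ itself, and obtain the explicit closed form
$g_\la(n)=\frac{\dim\la}{k!}\cdot\frac{n!}{(n-k+r)!}\cdot\prod_{j=1}^{r}(n-k+j-a_j)$,
from which both the degree $k$ and the leading coefficient $\frac{\dim\la}{k!}$ are immediate; your bookkeeping (the $n$-dependent factor being a monic falling factorial of degree $k-r$ times a monic degree-$r$ product, with $\ell_1-b_j=n-k+j-a_j$) checks out, and the validity range $n-k\ge a_1$ is exactly where $V(\la)_n$ is defined. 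The paper instead evaluates Macdonald's character polynomial formula (Theorem \ref{charpolyformula}) at the identity permutation: only the terms $\binom{X}{\rho}$ with $\rho=1^b$ survive, giving $\dim V(\la)_n=\chi^\la(\Id_{S_n})$ as a combination of $\binom{n}{b}$ with $b\le k$, and the top coefficient is identified as $\chi^\la_{[1^k]}=\dim\la$ because $\mu=\la$ is the only partition of $k$ for which $\la-\mu$ is a vertical strip. The trade-off: the paper's argument reuses machinery already central to its algorithm and to the proof of Theorem \ref{asymptotic} (where the same coefficient $F^\la_{1^k}=\dim\la$ reappears), while yours is self-contained and classical and yields strictly more information, namely an explicit factored formula for $g_\la(n)$ including all of its roots, rather than just its degree and leading term.
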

	\begin{proof}
		We compute the character polynomial $\chi^\la$ of $V(\la)$ by Equation \ref{youngtocharpoly} and use the fact that $\dim V(\la)_n = \chi^\la(\id_{S_n})$. We rewrite Equation \ref{youngtocharpoly} for convenience:
		$$
		\chi^\la = \sum_{|\rho| \leq |\la|} \binom{X}{\rho}  (-1)^{|\la| - |\rho|} \sum_\mu \chi_\rho^\mu.
		$$
		Recall that $\mu$ is indexed over all partitions $|\mu| = |\rho|$ such that $\la - \mu$ is a \emph{vertical strip}, that is, $\la$ can be obtained by adding $|\la| - |\mu|$ boxes to $\mu$ without adding more than one box in any one row.
		\vspace{.1in}\par
		The trivial permutation $\Id_{S_n}$ has cycle type $1^n 2^0 \dots n^0$, and thus when applying $\Id_{S_n}$ to $\chi^\la$, all the binomial terms $\binom{X}{\rho}(\Id_{S_n})$ vanish except for those with $\rho = (1, 1, \dots, 1) = 1^b$, in which case $\binom{X}{\rho}(\Id_{S_n}) = \binom{n}{b}$. Therefore,
		$$\chi^\la(\Id_{S_n}) = \sum_{b=0}^{|\la|} (-1)^b \binom{n}{b} \sum_{\mu} \chi^\mu_{[1^b]}.$$
		Thus, $\chi^\la(\Id_{S_n})$ is a polynomial in $n$ of degree at most $k = |\la|$. To show its degree is exactly $k$ and to compute the coefficient of $n^{k}$, it suffices to consider the coefficient $\sum_\mu \chi^\mu_{[1^{k}]}$ of $\binom{n}{k}$. The only $\mu$ with $|\mu| = |\la|$ such that $\la - \mu$ is a vertical strip is $\mu = \la$, so the coefficient of $\binom{n}{k}$ is $\chi^\la_{[1^k]} = \dim \la$. Thus, $g_\la(n)$ is a polynomial of degree $k$ with leading coefficient $\frac{\dim \la}{k!}$ as desired.
	\end{proof}
	
	\subsection{Asymptotic Behavior of Multiplicities}
	
	In this subsection we prove Theorem \ref{asymptotic}.
	Our proof follows from approximating the coefficient of $z^m$ in each formal power series $\Phi^\infty_\la(z)$ of Equation \ref{polystat}. First we introduce notation for the asympototic behavior of coefficients of formal Laurent series.
	
	\begin{definition}
		Let $f(z) = a_{-m}z^{-m} + \dots + a_0 + a_1 z + a_2 z^2 + \dots$ be a formal Laurent series in $z$ with coefficients $\{a_i\}_{i=-m}^\infty$ in $\C$. Let $r \geq 0$ be an integer. We say that $f$ is \textbf{$r$-bounded} if there exists $C \in \R^+$ such that for all but finitely many $n \in \Z$, $|a_n| \leq Cn^r$. In other words, the function $n \mapsto a_n$ is $O(n^r)$. 
	\end{definition}
	\renewcommand{\labelenumi}{(\alph{enumi})}
	\begin{lemma} \label{boundedarithmetic}
		Let $f, g$ be formal Laurent series in $z$ with coefficients in $\C$ and assume $f$ is $r$-bounded and $g$ is $s$-bounded. Let $h \in \C[z, z^{-1}]$ be a Laurent polynomial.
		\begin{enumerate}
			\item $f + c\cdot g$ is $\Big(\max{(r + s)}\Big)$-bounded for any $c \in \C$. 
			\item $f \cdot h$ is $r$-bounded.
			\item $f \cdot g$ is $(r + s + 1)$-bounded.
		\end{enumerate}
	\end{lemma}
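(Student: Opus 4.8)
The plan is to prove the three claims (a), (b), (c) by direct manipulation of the coefficient sequences, using the definition of $r$-boundedness and elementary estimates on sums. Throughout, write $f(z) = \sum_n a_n z^n$ and $g(z) = \sum_n b_n z^n$ (Laurent series, so the sums start at some possibly-negative index), and suppose $|a_n| \le C n^r$ for all $n \ge n_0$ and $|b_n| \le D n^s$ for all $n \ge n_0$, with $C, D > 0$.

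For part (a), the coefficient of $z^n$ in $f + c \cdot g$ is $a_n + c\, b_n$, so for $n$ large, $|a_n + c\, b_n| \le C n^r + |c|\, D n^s \le (C + |c| D)\, n^{\max(r,s)}$, giving $\max(r,s)$-boundedness. (I note the statement as displayed writes ``$\max(r+s)$'', which is surely a typo for $\max(r,s)$; the proof delivers the correct $\max(r,s)$.) For part (b), write $h(z) = \sum_{j = -M}^{M} c_j z^j$ with finitely many nonzero $c_j$. The coefficient of $z^n$ in $f \cdot h$ is $\sum_{j=-M}^{M} c_j a_{n-j}$, a sum of at most $2M+1$ terms; for $n$ large enough that $n - j \ge n_0$ for every relevant $j$, each term is bounded by $|c_j| \cdot C (n - j)^r \le |c_j| C (n + M)^r$. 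Since $(n+M)^r = O(n^r)$, the whole sum is $O(n^r)$, and part (b) follows.

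Part (c) is the main point, and the source of the ``$+1$''. The coefficient of $z^n$ in $f \cdot g$ is the convolution $c_n = \sum_{k} a_k b_{n-k}$, which for a product of Laurent series ranges over a finite window of indices of size roughly $n$ (bounded below by the sum of the two leading exponents). The plan is: split off the finitely many ``bad'' terms where $k < n_0$ or $n - k < n_0$ — there are only $O(1)$ such terms once $n$ is large (more precisely, at most $2 n_0$ minus the leading exponents, a constant), and each is bounded since a Laurent series has only finitely many negative-index coefficients and the bad $a_k$, $b_{n-k}$ lie in a fixed finite set, so their contribution is $O(1) = O(n^{r+s+1})$. For the ``good'' terms, with both $k \ge n_0$ and $n - k \ge n_0$, bound $|a_k b_{n-k}| \le C D\, k^r (n-k)^s \le C D\, n^r n^s = C D\, n^{r+s}$; there are at most $n$ such terms (the index $k$ ranges over a set of size at most $n - (\text{const})$), so their total contribution is at most $C D\, n \cdot n^{r+s} = C D\, n^{r+s+1}$. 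Combining, $|c_n| = O(n^{r+s+1})$.

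The only real obstacle is bookkeeping: making sure the convolution index set for a product of \emph{Laurent} series is handled correctly (it is finite for each $n$, with size $O(n)$, once one accounts for the fixed lower bounds coming from the two series' leading exponents), and carefully isolating the constantly-many terms involving negative indices so they can be absorbed into the error. Neither is deep; once the index ranges are pinned down, each estimate is a one-line application of $k \le n$, $n - k \le n$ and the defining bounds. I would present (a) and (b) in a sentence or two each and spend the bulk of the write-up on the careful splitting in (c).
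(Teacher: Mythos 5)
Your proposal is correct and follows essentially the same route as the paper: all three parts are direct coefficient estimates, and the ``$+1$'' in (c) comes, as in the paper, from bounding each of the roughly $n$ convolution terms by $CD\,n^{r+s}$. The only substantive difference is how the troublesome indices are handled in (c): the paper first reduces, via part (b), to series supported in degrees $\geq 1$ (``WLOG $m=-1$'') and enlarges $C,D$ so the bound holds for \emph{all} $n\geq 1$, which makes every convolution term a ``good'' term; you instead keep the Laurent series as given and split the convolution into good and bad index ranges. That split is fine, but one sentence in it is literally wrong: in a bad term with $k<n_0$, only $a_k$ lies in a fixed finite set, while $b_{n-k}$ can still grow like $n^s$ (and symmetrically with the roles swapped), so the bad-term contribution is $O\big(n^{\max(r,s)}\big)$ rather than $O(1)$. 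Since $r,s\geq 0$ this is still absorbed into $O(n^{r+s+1})$, so the conclusion is unaffected, but you should state the bound that way (or adopt the paper's shift-and-enlarge-constants trick, which avoids the issue entirely). You are also right that ``$\max(r+s)$'' in the statement is a typo for $\max(r,s)$, which is what both your argument and the paper's (where part (a) is dismissed as trivial) actually deliver.
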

	
	\begin{proof}
		We use the notation $[z^n]f$ for the coefficient of $z^n$ in $f$.
		\begin{enumerate}
			\item[(a)] Trivial.
			\item[(b)] Let 
			$$f(z) = a_{-m}z^{-m}+ \dots + a_0 + a_1z + \dots,$$ 
			and let
			$$h(z) = c_{-N}z^{-N} + \dots + c_Nz^N.$$ Since $f$ is $r$-bounded, let $C \in \R^+$ such that $|a_n| \geq Cn^r$ for all but finitely many $n$. Since $f \cdot h = \sum_{i=-N}^N c_i z^i f$, by part (a) it suffices to consider the case of $h(z) = z^N$ for $N \in \Z$. Notice that the function
			$$\psi:\Z\setminus \{0\} \to \Q \hspace{.5in} \psi(n) = \frac{(n - N)^r}{n^r}$$
			has bounded image in $\Q$ since $\lim_{n \to \infty} \psi(n) = \lim_{n \to -\infty} \psi(n) = 1$. Thus, let $D \in \R^+$ such that 
			$$C \frac{(n-N)^r}{n^r} \leq D$$
			for all $n \in \Z \setminus \{0\}$. Then for all but finitely many $n$,
			$$\Big|[z^n] (z^N \cdot f )\Big| = |a_{n- N}| \leq C(n - N)^r \leq D n^r.$$
			Thus, $z^N \cdot f$ is $r$-bounded.
			\item[(c)] Let 
			$$f(z) = a_{-m}z^{-m}+ \dots + a_0 + a_1z + \dots \hspace{.2in} g(z) = b_{-m}z^{-m}+ \dots + b_0 + b_1z + \dots$$ 
			be two formal Laurent series. Without loss of generality, we assume $m = -1$ for ease of notation, since if $m \neq -1$ then using (b) we may multiply by $z^{m-1}$ and then $z^{-m+1}$. Let $C, D \in \R^+$ such that $|a_n| \leq Cn^r$ and $|b_n| \leq Dn^s$ for all $n \geq 1$. Then for all $n \geq 1$ we have
			$$[z^n](f \cdot g) = \sum_{l = 1}^{n} a_{l}b_{n - l} \leq CD \sum_{l=1}^{n} |l|^r |n-l|^s \leq CD \sum_{l=1}^{n} n^r n^s = CD n^{r + s + 1} $$
			Therefore,
			$$[z^n](f \cdot g) \leq CD n^{r+s+1}$$
			for all $n$, so $f \cdot g$ is $(r + s + 1)$-bounded.
		\end{enumerate}
	\end{proof}
	
	\begin{lemma} \label{bound_phi}
		Let $\rho = 1^{j_1} \dots r^{j_r}$ be a partition of $k$ with $j_r \neq 0$. Then $\Phi^\infty_\la(z)$ is $(k - r)$-bounded. 
	\end{lemma}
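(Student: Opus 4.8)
The plan is to apply Lemma \ref{boundedarithmetic} to the defining product
$$\Phi_\rho^{\infty}(z) = (1 - z)\prod_{i=1}^r \binom{M_{i}(z^{-1})}{j_i} (z^{i} - z^{2i} + \dots )^{j_i},$$
after peeling off a Laurent-polynomial factor. First I would check that everything except the tails $(z^i - z^{2i} + \dots)^{j_i}$ is a Laurent polynomial in $z$: the factor $1-z$ obviously is, and since $M_i(z^{-1}) = \tfrac{1}{i}\sum_{d\mid i}\mu(i/d)z^{-d}$ lies in $\C[z,z^{-1}]$, so does the polynomial expression $\binom{M_i(z^{-1})}{j_i}$ in it. Hence $h(z) := (1-z)\prod_{i=1}^r \binom{M_i(z^{-1})}{j_i}\in\C[z,z^{-1}]$ and $\Phi^\infty_\rho(z) = h(z)\cdot\prod_{i=1}^r (z^i - z^{2i} + \dots)^{j_i}$.

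Next I would bound the remaining product. Each series $z^i - z^{2i} + z^{3i} - \cdots$ has all coefficients in $\{-1,0,1\}$, hence is $0$-bounded. Counted with multiplicity, $\prod_{i=1}^r (z^i - z^{2i} + \dots)^{j_i}$ is a product of exactly $J := \sum_{i=1}^r j_i$ such $0$-bounded factors, where $J$ is the number of parts of $\rho$ (the factors with $j_i=0$ equal $1$ and may be dropped). Collapsing these factors takes $J-1$ binary multiplications, and each application of Lemma \ref{boundedarithmetic}(c) raises the boundedness order by $1$ while the original orders contribute $0$; so the product is $(J-1)$-bounded. Multiplying back by the Laurent polynomial $h(z)$ and invoking Lemma \ref{boundedarithmetic}(b) then shows $\Phi^\infty_\rho(z)$ is $(J-1)$-bounded.

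Finally I would compare $J-1$ with $k-r$. Because every part of $\rho$ is at least $1$ and the part $r$ occurs at least once ($j_r\neq 0$),
$$k = \sum_{i=1}^r i\,j_i = \sum_{i=1}^r j_i + \sum_{i=1}^r (i-1)\,j_i \;\geq\; J + (r-1)\,j_r \;\geq\; J + r - 1,$$
so $J - 1 \leq k - r$; since an $a$-bounded Laurent series is automatically $b$-bounded for every integer $b\geq a$, this yields that $\Phi^\infty_\rho(z)$ is $(k-r)$-bounded, as claimed. I do not anticipate a genuine obstacle, since all the analytic content is already packaged in Lemma \ref{boundedarithmetic}; the only points to watch are the bookkeeping of the ``$+1$''s in part (c) (which is why the exponent comes out as $J-1$ rather than $\sum_i(j_i-1)$) and the degenerate cases $j_i=0$ and $J=1$, all of which are absorbed into the counting above.
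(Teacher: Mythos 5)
Your proposal is correct and follows essentially the same route as the paper: factor off the Laurent polynomial $(1-z)\prod_i\binom{M_i(z^{-1})}{j_i}$, use Lemma \ref{boundedarithmetic} to see the product of the $0$-bounded tails is $\bigl(\sum_i j_i - 1\bigr)$-bounded, and then bound $\sum_i j_i \leq k - r + 1$ using $j_r \geq 1$. The only cosmetic difference is that you multiply the tails one at a time (counted with multiplicity) rather than first bounding each $(z^i - z^{2i}+\dots)^{j_i}$ as $(j_i-1)$-bounded, which yields the same exponent.
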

	
	\begin{proof}
		We repeat Equation \ref{polystat} defining $\Phi_\rho^{\infty}(z)$ for convenience:
		$$\Phi_\rho^{\infty}(z) = (1 - z)\prod_{i=1}^r \binom{M_{i}(z^{-1})}{j_i} (z^{i} - z^{2i} + \dots )^{j_i}.$$
		Note that for $l \in \N$, the formal Laurent series $(z^l - z^{2l} + \dots)$ is $0$-bounded (with $C = 1$). Therefore by Lemma \ref{boundedarithmetic},
		$(z^l - z^{2l} + \dots)^j$ is $(j-1)$-bounded. $\Phi_\rho^\infty$ is thus a product of a Laurent polynomial with a $j_1-1$-bounded, $\dots$, $j_r-1$-bounded Laurent series and thus is $\Big(-1 + \sum_{i=1}^r j_i\Big)$-bounded by Lemma \ref{boundedarithmetic}. Since $\sum_{i=1}^r ij_i = k$ and $j_r \geq 1$, we have that 
		$$r-1 + \sum_{i=1}^rj_i \leq (r-1)j_r + \sum_{i=1}^r j_i \leq \sum_{i=1}^r ij_i = k$$
		Therefore, $\sum_{i=1}^r j_i \leq k - r + 1$, so $\Phi_\rho^{\infty}(z)$ is $(k-r)$-bounded.
	\end{proof}

	\begin{lemma}\label{k-asym}
		Fix a positive integer $k$. Let $a_n$ be the $n$th Taylor coefficient of $\Phi^\la_{1^k}(-z)$ so $\Phi^\la_{1^k}(z) = a_0 - a_1 z + a_2 z^2 - \dots$ as a formal power series. Then there exists a polynomial $f(n)$ of degree $k-1$ with leading coefficient $\frac{2}{k!}$ such that $a_n = f(n)$ for $n \geq k$.
	\end{lemma}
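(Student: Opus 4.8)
The plan is to compute the series $\Phi^\infty_{1^k}(z)$ (the right side of \eqref{polystat} for $\rho = 1^k$) in closed form and then extract its coefficients directly. Since the partition $1^k$ has only the part $1$, the product in \eqref{polystat} collapses to the single factor with $i = 1$, $j_1 = k$. Using $M_1(z) = z$, so $M_1(z^{-1}) = z^{-1}$, together with $z - z^2 + z^3 - \cdots = \tfrac{z}{1+z}$ (whence $(z - z^2 + \cdots)^k = \tfrac{z^k}{(1+z)^k}$), and writing $z^{-1} - j = \tfrac{1-jz}{z}$ so that $\binom{M_1(z^{-1})}{k} = \binom{z^{-1}}{k} = \tfrac{1}{k!\,z^k}\prod_{j=1}^{k-1}(1-jz)$ (the $j = 0$ factor being $1$), multiplication of the three pieces makes the powers $z^{\pm k}$ cancel and yields
$$\Phi^\infty_{1^k}(z) = \frac{(1-z)\,\prod_{j=1}^{k-1}(1-jz)}{k!\,(1+z)^k}.$$

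Substituting $z \mapsto -z$ gives
$$\Phi^\infty_{1^k}(-z) = \frac{P(z)}{k!\,(1-z)^k}, \qquad P(z) := (1+z)\prod_{j=1}^{k-1}(1+jz) = \sum_{m=0}^{k} c_m z^m,$$
a polynomial of degree $k$. Combining with $(1-z)^{-k} = \sum_{n \geq 0}\binom{n+k-1}{k-1}z^n$ and taking the Cauchy product, $a_n = \tfrac{1}{k!}\sum_{m=0}^{\min(n,k)}c_m\binom{n-m+k-1}{k-1}$. Now $N \mapsto \binom{N}{k-1}$, read as the polynomial $\tfrac{1}{(k-1)!}N(N-1)\cdots(N-k+2)$, has degree $k-1$ and vanishes at $N = 0,1,\dots,k-2$; hence the terms with $m > n$ contribute $0$ as soon as $n \geq 1$, and for $n \geq k$ we obtain $a_n = f(n)$ with
$$f(n) := \frac{1}{k!}\sum_{m=0}^{k} c_m\binom{n-m+k-1}{k-1},$$
a polynomial in $n$ of degree at most $k-1$.

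It then remains to read off the degree and leading term: the $n^{k-1}$-coefficient of each $\binom{n-m+k-1}{k-1}$ is $\tfrac{1}{(k-1)!}$, so the $n^{k-1}$-coefficient of $f$ is $\tfrac{1}{k!\,(k-1)!}\sum_m c_m = \tfrac{P(1)}{k!\,(k-1)!}$, and a one-line evaluation gives $P(1) = 2\prod_{j=1}^{k-1}(1+j) = 2\cdot k!$, so the leading coefficient of $f$ equals $\tfrac{2}{(k-1)!} \neq 0$ and $\deg f = k-1$ exactly; this is precisely the constant needed in Theorem~\ref{asymptotic}, consistent with the facts that $F^\la_{1^k} = \chi^\la_{1^k} = \dim\la$ (Theorem~\ref{charpolyformula}) and that the $\rho = 1^k$ summand is the unique $(k-1)$-bounded term of $\sum_i(-1)^i d_i(\la)z^i$ (Lemma~\ref{bound_phi}).

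I do not expect a serious obstacle: the argument is a direct computation once the closed form for $\Phi^\infty_{1^k}$ is available. The two points that require care are the algebraic simplification of $\binom{M_1(z^{-1})}{k}$ into the product form above, and the bookkeeping that makes the polynomial identity $a_n = f(n)$ valid from $n = k$ onward, which rests on the vanishing of $\binom{N}{k-1}$ (as a polynomial in $N$) at $N = 0, 1, \dots, k-2$ to eliminate the low-order boundary terms of the convolution.
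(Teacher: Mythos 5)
Your proof is correct and is essentially the paper's own argument: both write $\Phi^\infty_{1^k}(-z)$ as a degree-$k$ polynomial factor times the expansion of $(1-z)^{-k}$ and extract coefficients by convolution — the paper expands the falling factorial via unsigned Stirling numbers and uses $\sum_a c(k,a)=k!$, while you keep it as the product $\prod_{j=1}^{k-1}(1+jz)$ and evaluate $P(1)=2\,k!$, which is only a bookkeeping difference. One remark: the leading coefficient you obtain, $\tfrac{2}{(k-1)!}$, differs from the $\tfrac{2}{k!}$ printed in the lemma statement, but it agrees with the conclusion of the paper's own proof and with how the lemma is used in Theorem \ref{asymptotic}, so the constant in the printed statement is a typo rather than a flaw in your argument.
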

	
	\begin{proof}
		Let $w = -z$ for ease of notation. We have:
		$$\Phi^\infty_{1^k}(z) = \Phi^\infty_{1^k}(-w) = (1+w)\binom{M_1(-w^{-1})}{k}(-w - w^2 + \dots)^k $$
		
		$$ = (1+w)\frac{w^{-1}(w^{-1} + 1) \dots (w^{-1} +k -1)}{k!}(w + w^2 + \dots)^k.$$
		By Equation \ref{coefequation},
		$$(w + w^2 + \dots)^k = \sum_{j=k}^\infty \binom{j - 1}{j - k} w^j.$$
		Let $c(n,k) = |s(n, k)|$ denote the $n,k$th unsigned Sterling number, so that
		$$w^{-1}(w^{-1} + 1) \dots (w^{-1} + k -1) = \sum_{a = 0}^k w^{-a}c(k,a).$$
		Then we have that
		$$a_n = [w]^n \Phi^\infty_{1^k}(-w) = \sum_{a = 0}^k \frac{c(a, k)}{k!} \binom{n + a - 2}{n + a - k - 1} +\sum_{a = 0}^k \frac{c(a, k)}{k!} \binom{n + a - 1}{n + a - k}.$$
		As functions of $n$, each of $\binom{n + a -2}{n + a - k -1}, \binom{n + a - 1}{n + a - k}$ are polynomials in $n$ of degree $k - 1$ with leading coefficient $\frac{1}{(k-1)!}$, as long as $(n + a - k - 1) \geq 0$, i.e., $n > k$. Therefore, the coefficients $a_n$ are given by a polynomial $f(n)$ for $n > k$.
		To complete the proof of the Lemma we compute the coefficient of $n^{k-1}$ in $f(n)$. Since $f(n)$ is expressed as a sum of polynomials of degree $k-1$, $\deg f = k -1$ so long as the sum of their leading coefficients is non-zero. Recall that $\sum_{a=0}^k c(a,k) = 1(1 + 1)(1 + 2) \dots (1 + k -1) = k!$. Therefore,
		$$[n^{k-1}]f(n) = \sum_{a=0}^k \frac{c(a,k)}{k!} \frac{1}{(k-1)!} + \sum_{a = 0}^k \frac{c(a,k)}{k!} \frac{1}{(k-1)!} = \frac{2}{(k-1)!},$$
		so $f$ is degree $k-1$ and has leading coefficient $\frac{2}{(k-1)!}$, proving the Lemma.
	\end{proof}
	Recall Equation \ref{chen_formal_equation} which gives $ (-1)^i d_i(\la) z^i$ as a rational combination of the formal power series $\Phi^\infty_\rho(z)$ for $|\rho| \leq |\la|$:
	$$\sum_{i \geq 0}(-1)^i d_i(\la) z^i = \sum_{|\rho| \leq |\la|} F^\la_\rho \Phi_\rho^{\infty}(z).$$
	By Lemma \ref{bound_phi}, all of the formal power series $\Phi^\infty_{\rho}(z)$ are $(k - 2)$-bounded except for possibly $\Phi^\infty_{1^k}(z)$. By Lemma \ref{k-asym}, the coefficients $a_n$ of $\Phi^\infty_{1^k}$ are asymptotically $\frac{2 n^{k-1}}{(k-1)!}$, which thus dominates the asymptotic behavior of $\lim_{i\to\infty} d_i(\la)$ as long as $F^\la_{1^k}$ is non-zero. We compute $F^\la_{1^k} =  \chi^\la_{1^k}$ using Equation \ref{youngtocharpoly}.
	Recall that $\chi^\la_{1^k}$ is the dimension of the irreducible representation given by $\la$, and thus is always non-zero so long as $\la$ is nonempty. Therefore, 
	$$\lim_{i \to \infty} \frac{d_i(\la)}{i^{k-1}} = \lim_{i \to \infty} F^\la_{1^k}\frac{2i^{k-1}}{(k-1)! i^{k-1}} + \frac{o(i^{k-1})}{i^{k-1}} = \frac{2\dim \la}{(k-1)!} \neq 0,$$
	which completes the proof of Theorem \ref{asymptotic}.
	
	\subsection{Bound of Length of Tableaux}
	
	In this subsection we prove Theorem \ref{length_bound}. We would like to thank an anonymous reviewer again for this proof. 
	
	\begin{definition}
		Let $V$ be a representation of $S_n$. Let $\ell(V)$ be the longest length of all partitions
		corresponding to irreducible summands of $V$, called the \textit{length} of $V$ as an $S_n$ representation.
	\end{definition}
	To prove Theorem \ref{length_bound}, it suffices to show the following unstable statement.
	
	\begin{proposition}
		$\ell(H^i(\pconf_n(\C);\C)) \leq i + 1$ for all $i$ and $n$. 
	\end{proposition}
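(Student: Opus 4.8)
The plan is to bound the length of each summand in the Lehrer--Solomon decomposition $H^i(\pconf_n(\C);\C) \cong \bigoplus_{\mu \in T(i,n)} \Ind_{Z(c_\mu)}^{S_n}(\xi_\mu)$ already recalled in the proof of Lemma~\ref{dimh_i}, the sum being over partitions $\mu$ of $n$ with $n-i$ parts. Fix $\mu = 1^{\mu_1}2^{\mu_2}\cdots r^{\mu_r} \in T(i,n)$; subtracting its two defining relations $\sum_j \mu_j = n-i$ and $\sum_j j\mu_j = n$ gives $\sum_{j\ge 2}(j-1)\mu_j = i$, which is the ``budget'' I have to respect. Since $Z(c_\mu) = \prod_j\big(\Z/j \wr S_{\mu_j}\big)$ sits inside the Young subgroup $\prod_j S_{j\mu_j}$, I can write $\Ind_{Z(c_\mu)}^{S_n}(\xi_\mu) \cong \Ind_{\prod_j S_{j\mu_j}}^{S_n}\big(B_1\boxtimes\cdots\boxtimes B_r\big)$ where $B_j := \Ind_{\Z/j\wr S_{\mu_j}}^{S_{j\mu_j}}(\xi_\mu)$ is the ``$j$-block'' (the restriction of $\xi_\mu$ to the relevant factor), and reduce to bounding $\ell(B_j)$ for each $j$.

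The workhorse is the elementary Littlewood--Richardson length estimate: if a partition $\gamma$ indexes an irreducible constituent of $\Ind_{S_a\times S_b}^{S_{a+b}}(U\boxtimes W)$, then its number of rows satisfies $\ell(\gamma) \le \ell(U) + \ell(W)$ --- for an irreducible constituent $V(\alpha)$ of $U$ with $c^\gamma_{\alpha\beta}\ne 0$, the first column of any semistandard filling of $\gamma/\alpha$ has at most $\ell(\beta)\le\ell(W)$ cells. Iterating in stages gives $\ell\big(\Ind_{Z(c_\mu)}^{S_n}\xi_\mu\big) \le \sum_j \ell(B_j)$, so it is enough to show $\ell(B_1)\le 1$, $\ell(B_2)\le\mu_2$, and $\ell(B_j)\le (j-1)\mu_j$ for $j\ge 3$. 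Here I need two inputs about the explicit Lehrer--Solomon character $\xi_\mu$: it is trivial on the factor $S_{\mu_1}$ permuting the fixed points of $c_\mu$ (which must hold, since otherwise a sign character would already produce arbitrarily long partitions in $H^0$, which is trivial) and trivial on each cyclic factor $\Z/2$ generated by a transposition (forced by $H^1(\pconf_n(\C))\cong M^{(n-2,2)}$); and on a cyclic factor $\Z/j$ with $j\ge 3$ it restricts to a character $\psi_j$ with $\psi_j\ne\sgn|_{\Z/j}$ (part of the explicit description --- it reflects that the top reduced homology of the partition lattice $\Pi_j$ contains no trivial $S_j$-summand). Granting these: $B_1 = \mathbf 1_{S_{\mu_1}}$ has length $1$; $B_2$ is a direct summand of the Young permutation module $M^{(2^{\mu_2})} = \Ind_{(S_2)^{\mu_2}}^{S_{2\mu_2}}(\mathbf 1)$, whose constituents are indexed by partitions dominating $(2^{\mu_2})$ and so have at most $\mu_2$ rows; and for $j\ge 3$, $B_j$ is a direct summand of $\Ind_{(S_j)^{\mu_j}}^{S_{j\mu_j}}\!\big((\Ind_{\Z/j}^{S_j}\psi_j)^{\boxtimes\mu_j}\big)$, while $\Ind_{\Z/j}^{S_j}(\psi_j)$ contains no copy of $\sgn_{S_j}$ (as $\psi_j\ne\sgn|_{\Z/j}$) and hence has at most $j-1$ rows, so the length estimate yields $\ell(B_j)\le (j-1)\mu_j$.

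Summing, $\ell\big(\Ind_{Z(c_\mu)}^{S_n}\xi_\mu\big) \le 1 + \sum_{j\ge 2}(j-1)\mu_j = 1 + i$, and the Proposition follows. The step I expect to need the most care is isolating the two properties of $\xi_\mu$ above from Lehrer--Solomon's construction --- everything else is bookkeeping with the Littlewood--Richardson rule and Kostka numbers --- but since the $j$-block character depends only on $j$, these properties are pinned down by matching against the elementary computations of $H^0$ and $H^1$ together with the structure of $\widetilde H_{\mathrm{top}}(\overline\Pi_j)$. A cleaner-to-state but less self-contained alternative would be an induction on $n$: the Fadell--Neuwirth fibration gives an $S_{n-1}$-equivariant splitting $\Res^{S_n}_{S_{n-1}}H^i(\pconf_n(\C)) \cong H^i(\pconf_{n-1}(\C)) \oplus \big(H^{i-1}(\pconf_{n-1}(\C))\otimes\C[\{1,\dots,n-1\}]\big)$, and since $W\otimes\C[\{1,\dots,m\}] \cong \Ind_{S_{m-1}}^{S_m}\Res^{S_m}_{S_{m-1}}W$ one gets $\ell\big(W\otimes\C[\{1,\dots,m\}]\big)\le\ell(W)+1$ immediately; the only extra ingredient this route needs is that $\sgn_{S_n}$ does not occur in $H^i(\pconf_n(\C))$ for $i<n-1$, which must be checked separately since that representation is invisible to restriction to $S_{n-1}$.
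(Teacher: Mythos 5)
Your argument is correct and is essentially the paper's proof: the same Lehrer--Solomon decomposition, the same passage through a Young subgroup (inducing the block characters up from the cyclic factors), the Littlewood--Richardson length bound, and Frobenius reciprocity to rule out the sign constituent in each $\Ind_{C_j}^{S_j}\psi_j$, yielding $1+\sum_{j\geq 2}(j-1)\mu_j = i+1$. The one soft spot is that the values of $\xi_\mu$ on the blocks (trivial on $S_{\mu_1}$ and on each $C_2$, and $\psi_j \neq \sgn|_{C_j}$ for $j \geq 3$) should simply be read off from Lehrer--Solomon's explicit description --- as the paper does via $L(k)=\sgn_k\otimes\Ind_{C_k}^{S_k}\zeta$ --- rather than reverse-engineered from $H^0$, $H^1$ and the partition lattice, since your uniformity claim that the $j$-block character depends only on $j$ is itself part of that explicit description.
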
 
	
	\begin{proof}
		We prove the statement using Lehrer and Solomon's description of $H^i(\pconf_n(\C);\C)$ as an $S_n$ representation \cite[Equation 1.3]{lehrersolomon}:
		$$H^i(\pconf_n(\C);\C) \cong \bigoplus_{\substack{|\la| = n\\\ell(\la)=n-i}} \Ind_{S_\la}^{S_n} L(\la),$$
		where, if $\la = 1^{r_1}2^{r_2} \dots$ is the cycle notation of $\la$, then $S_\la := \prod_{k\geq 1}(S_k^{r_k} \rtimes S_{r_k})$ with the semidirect product induced by permutation $S_{r_k} \circlearrowright (S_k^{r_k})$, i.e., the wreath product $S_k^{r_k} \wr S_{r_k}$. 
		$L(\la)$ refers to the representation of $S_\la$ given by	$\otimes_{k\geq1}L(k)^{r_k}$, where $L(k) = \sgn_k \otimes \Ind_{C_k}^{S_k}\ze$, where $C_k \subset S_k$ is the cyclic subgroup generated by a $k$-cycle and $\ze$ denotes the character of $C_k$ given by multiplication by a primitive $k$th root of unity. Equivalently, $L(k) = \sgn_k \otimes\  \mathsf{Lie}(k)$ where $\mathsf{Lie}$ is the Lie operad.
		\vspace{.1in}\par
		Now let $S'_\la$ be the subgroup of $S_\la$ given by $S_{r_1} \times \prod_{k\geq 2} S_k^{r_k}$. Unlike $S_\la$, the subgroup $S'_\la$
		is a Young
		subgroup of $S_n$. Since
		$$\Ind_{S_\la}^{S_n} L(\la) \subseteq 
		\Ind^{S_n}_{S'_\la} \Res^{S_\la}_{S_\la'} L(\la),$$
		it suffices to show that $\ell(\Ind^{S_n}_{S'_\la} \Res^{S_\la}_{S_\la'} L(\la)) \leq i+1$. We use the following facts:
		\renewcommand{\labelenumi}{(\arabic{enumi})}
		\begin{enumerate}
			\item \label{fact_1} If $V$ is a representation of $S_l$ and $W$ is a representation of $S_k$, then 
			$$\ell(V) + \ell(W) = \ell(\Ind_{S_k \times S_l}^{S_{k+l}} V \otimes W).$$
			
			\item \label{fact_2} If $k \geq 2$ then $\ell(L(k)) = k-1$, and $\ell(L(1)) = 1$.
		\end{enumerate}
		
		These facts together, along with the fact that $\Res^{S_\la}_{S'_\la}L(k)^{r_k} \cong \bigotimes_{i=1}^{r_k} L(k)$ as an $S_k^{r_k}$ representation imply that 
		$$\ell(\Ind_{S'_\la}^{S_n}\Res_{S'_\la}^{S_\la} L(\la)) = r_1 + \sum_{k \geq 2} (k-1)r_k.$$
		Recalling that $\sum_k kr_k = n$ and $\sum_k r_k = n-i$, we have
		$$\ell(\Ind_{S'_\la}^{S_n}\Res_{S'_\la}^{S_\la} L(\la)) = \begin{cases}
			i & r_1 = 0\\
			i+1 & r_1 > 0
		\end{cases}$$
		\vspace{.1in}\par
		Thus, it only remains to prove Facts \ref{fact_1} and \ref{fact_2}.
		Fact \ref{fact_1} follows from the Littlewood-Richardson rule. To prove Fact \ref{fact_2}, we use Frobenius reciprocity twice. The case of $k = 1$ is trivial, so let $k \geq 2$ be a positive integer.
		\vspace{.1in}\par
		To show that $\ell(L(k)) < k$, it suffices to show that $L(k)$ does not contain a summand isomorphic to the sign representation, as $\sgn_{k}$ is the only length $k$ irreducible representation of $S_k$. We have
		$$\langle L(k), \sgn_k \rangle_{S_k} = \langle \Ind_{C_k}^{S_k} \ze, \Id_{S_k} \rangle_{S_k} = \langle \ze, \Id_{C_k} \rangle = 0,$$
		where $\Id_{S_k}$ is the trivial character on $S_k$ and $\Id_{C_k}$ is the trivial character on $C_k$. Thus, $\sgn_k$ is not a subrepresentation of $L(k)$.
		\vspace{.1in}\par
		To show that $\ell(L(k)) \geq k-1$, it suffices to show that $L(k)$ contains a summand isomorphic to $\sgn_k \otimes V$, where $V$ is the standard representation of $S_k$. Then using that $\Res^{S_k}_{C_k} V \cong \bigoplus_{i=1}^{k-1} \ze^{\otimes i}$, we have
		$$\langle L(k), \sgn_k \otimes V \rangle_{S_k} \cong \langle \Ind_{C_k}^{S_k} \ze, V \rangle_{S_k} \cong \Big\langle \ze, \bigoplus_{i=1}^{k-1} \ze^{\otimes i} \Big\rangle_{C_k} = 1.$$
		\par In conclusion, $\ell(L(k)) = k-1$, as desired.
	\end{proof}

	\appendix 
	
	\section{Character Polynomials} \label{charpolysappendix}
	
	One reason why the definition of the irreducible families $V(\la)$ is natural is because the character of each irreducible $V(\la)_n$ is given simultaneously by a single \textit{character polynomial}. In fact, the phenomenon of representation stability in $H^i(\pconf_n(\C);\C)$ is equivalent to its character as an $S_n$ representation eventually being given by a single character polynomial as $n \to \infty$. 
	
	\begin{definition}
		Define functions $\Big\{X_i:\bigsqcup_{n \geq 1}{S_n} \to \Z\Big\}_{i \geq 1}$ by the following:
		for an element $\si \in S_n$, let
		$$X_i(\si) := \text{(number of $i$-cycles in $\si$).}$$
	\end{definition}
	
	\begin{definition} \textbf{(Character Polynomial).}
		Let $P \in \Q[X_1, X_2, \dots, ]$ be a polynomial. Define $P:\bigsqcup_{n \in \N}S_n \to \Q $ by
		$$P(\si) := P(X_1(\si), X_2(\si), \dots, X_n(\si), 0, 0, \dots).$$
		for $\si \in S_n$.
		With respect to the function $P:\bigsqcup_n S_n \to \Q$, $P$ is called a \textit{character polynomial}. The degree of a character polynomial is defined by $\deg X_k = k$ and extended to arbitrary polynomials by linearity.
	\end{definition}
	
	\begin{definition}\textbf{(Binomial Basis)} \label{binombasis}
		Let $\rho = 1^{\rho_1} 2^{\rho_2} \dots r^{\rho_r}$ be a partition of $n$. This notation means that $\rho$ contains $\rho_i$ parts of size $i$, so $\sum_i i \rho_i = n$. Define a character polynomial $\binom{X}{\rho}$ by
		$$\binom{X}{\rho} = \binom{X_1}{\rho_1} \binom{X_2}{\rho_2} \dots \binom{X_r}{\rho_r}.$$
		The set
		$$\bigg\{ \binom{X}{\rho}\bigg\}$$
		indexed over all partitions $\rho$ is called the \textit{binomial basis} of $\Q[X_1, X_2, \dots]$.
	\end{definition}
	The binomial basis is convenient for computations because it naturally arises for character polynomials of irreducible representations and for polynomial statistic formulas. 
	
	For any Young diagram $\la$ with $k$ boxes, there is a unique character polynomial $P$ of degree $k$ which is simultaneously the character of every irreducible $V(\la)_n$. For instance when $\la = \yng(1)$, $V(\la)$ is the family of standard representations of $S_n$, and $\chi^\la = \binom{X_1}{1} - 1$. This fact is a consequence of the Frobenius character formula \cite[Formula 4.10]{fulton_and_harris} and was first observed by Frobenius in the early 1900s \cite[pg. 134]{macdonald1998symmetric}. We originally implemented an algorithm based on the Frobenius character polynomial $\chi^\la$. However, this algorithm is almost always less efficient than the formula given by Macdonald in Theorem \ref{charpolyformula}. The only cases where they are comparable is when $\la$ is of length $2$ or less.
	\vspace{.1in}\par
	The notation $\chi^\mu_\rho$ in Theorem \ref{charpolyformula} refers to evaluating the irreducible representation of the partition $\mu$ (by the Young bijection) on the conjugacy class of $S_n$ given by $\rho$. In order to efficiently compute the coefficients $\chi^\mu_\rho$ we utilize the recursive Murnaghan-Nakayama rule. 
	
	\begin{theorem} \textbf{(Murnaghan-Nakayama Rule \cite[ex 1.7.5]{macdonald1998symmetric})}
		Let $\mu = (\mu_1, \dots, \mu_r)$,  $\rho = (\rho_1, \dots, \rho_s)$ be partitions of $n$. Then
		\begin{equation}\label{murnaghan_nakayama}
			\chi^\mu_\rho = \sum_{\xi \in BS(\mu,\rho_1)} (-1)^{ht(\xi)}\chi^{\mu - \xi}_{\rho \setminus \rho_1},
		\end{equation}
		where $BS(\mu, \rho_1)$ is the set of border strips $\xi$ within $\mu$ of exactly $\rho_1$ boxes such that $\mu - \xi$ is still a valid Young diagram. $ht(\xi)$ is the number of rows $\xi$ touches in $\mu$ minus $1$.
	\end{theorem}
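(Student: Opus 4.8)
The plan is to translate the identity into the ring of symmetric functions $\Lambda$ via the Frobenius characteristic map and then reduce it to a single structure‑constant computation. Under the Hall inner product on $\Lambda$, for which the Schur functions $\{s_\mu\}$ form an orthonormal basis, one has $p_\rho = \sum_{|\mu|=n}\chi^\mu_\rho\, s_\mu$, i.e. $\chi^\mu_\rho = \langle p_\rho,\, s_\mu\rangle$. Writing $\rho' = \rho\setminus\rho_1$, multiplicativity of the power sums gives $p_\rho = p_{\rho_1}\, p_{\rho'}$, and expanding $p_{\rho'} = \sum_\nu \chi^\nu_{\rho'}\, s_\nu$ yields
\[
\chi^\mu_\rho
= \Big\langle\, p_{\rho_1}\textstyle\sum_\nu \chi^\nu_{\rho'}\, s_\nu,\ s_\mu \Big\rangle
= \sum_\nu \chi^\nu_{\rho'}\, \big\langle\, p_{\rho_1}\, s_\nu,\ s_\mu \big\rangle .
\]
Thus the theorem is equivalent to the assertion that $\langle\, p_r\, s_\nu,\ s_\mu\rangle$ equals $(-1)^{ht(\mu/\nu)}$ when $\mu/\nu$ is a border strip of size $r$ and is $0$ otherwise; granting this and writing $\nu = \mu - \xi$ recovers Equation \ref{murnaghan_nakayama} verbatim.

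The crux is therefore the ``add a border strip'' expansion
\[
p_r\, s_\nu = \sum_{\xi}\, (-1)^{ht(\xi)}\, s_{\nu\cup\xi},
\]
the sum running over border strips $\xi$ of size $r$ that can be adjoined to $\nu$. I would prove it from the Jacobi--Trudi determinant $s_\nu = \det\!\big(h_{\nu_i - i + j}\big)_{1\le i,j\le N}$ with $N$ large: multiply by $p_r$, use the triangular Newton-type relations expressing $p_r$ through the $h$'s to rewrite the product as a signed sum of determinants of the same shape whose row-index sequences differ from that of $\nu$ by sliding a single connected ribbon of total length $r$, and then observe that each term whose row-index sequence is not strictly decreasing (so does not come from a genuine partition) is killed by the sign-reversing involution transposing the two offending rows --- the usual Jacobi--Trudi straightening. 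What survives is exactly one determinant $\pm\, s_{\nu\cup\xi}$ per legal border-strip addition $\xi$, and the sign works out to $(-1)^{ht(\xi)}$ since $ht(\xi)$ is the parity of the (cyclic) row permutation needed to reorder the index sequence. A purely representation-theoretic alternative, which I would also mention: pick $w\in S_n$ of cycle type $\rho$ whose $\rho_1$-cycle $c$ is supported on $\{1,\dots,\rho_1\}$ and whose remaining cycles $w'$ lie in $S_{n-\rho_1}$, so that $\chi^\mu_\rho$ is the character of $\Res^{S_n}_{S_{\rho_1}\times S_{n-\rho_1}}\mu$ evaluated at $(c, w')$; decompose this restriction by the Littlewood--Richardson rule, insert the classical value of $\chi^\alpha$ on a single $\rho_1$-cycle (nonzero only for hooks $\alpha = (\rho_1 - b,\, 1^b)$, where it is $(-1)^b$), and verify that $\sum_b (-1)^b\, c^\mu_{(\rho_1 - b,\, 1^b),\, \nu}$ collapses to $(-1)^{ht(\mu/\nu)}$ for border strips and to $0$ otherwise.

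I expect the main obstacle to be precisely this combinatorial core: showing that in the Jacobi--Trudi expansion (equivalently, in the hook-restricted Littlewood--Richardson sum) only border strips survive, weighted by the height sign. Everything preceding it --- the Frobenius correspondence $p_\rho = \sum\chi^\mu_\rho s_\mu$, multiplicativity of $p_\rho$, orthonormality of the $s_\mu$ --- is formal bookkeeping; the genuine content is the rigidity of the straightening, and carrying out the sign and cancellation analysis cleanly is where care is required. Of course, as this is the classical Murnaghan--Nakayama rule one may instead simply cite \cite[I.7.5]{macdonald1998symmetric}; the above is the self-contained route I would follow.
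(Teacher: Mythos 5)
The paper does not prove this statement at all: it is quoted as a classical result with a citation to Macdonald \cite[ex.\ 1.7.5]{macdonald1998symmetric}, so there is no in-paper argument to compare against. Your proposal is the standard symmetric-function proof and is correct in outline: the reduction via $\chi^\mu_\rho=\langle p_\rho,s_\mu\rangle$, multiplicativity $p_\rho=p_{\rho_1}p_{\rho\setminus\rho_1}$, and orthonormality of the Schur basis correctly reduces everything to the border-strip expansion $p_r s_\nu=\sum_\xi(-1)^{ht(\xi)}s_{\nu\cup\xi}$, which is exactly the content of the cited exercise. For that key lemma, your Jacobi--Trudi straightening sketch can be made to work, but it is the least clean of the available routes; the classical argument uses the bialternant formula, where $p_r\,a_{\nu+\delta}=\sum_i a_{\nu+\delta+r\epsilon_i}$ and sorting each shifted sequence either produces a repeated entry (the term dies) or a partition $\mu$ with $\mu/\nu$ a border strip of size $r$, the number of transpositions needed being precisely $ht(\xi)$ --- this delivers the border-strip condition and the sign simultaneously, with no separate cancellation analysis. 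Your second, representation-theoretic route (restrict to $S_{\rho_1}\times S_{n-\rho_1}$, use that $\chi^\alpha$ on a full $\rho_1$-cycle vanishes unless $\alpha$ is a hook $(\rho_1-b,1^b)$, where it equals $(-1)^b$, then evaluate $\sum_b(-1)^b c^\mu_{(\rho_1-b,1^b),\nu}$) is also legitimate and is equivalent to the first via $p_r=\sum_b(-1)^b s_{(r-b,1^b)}$; note that the hook-sum collapse you defer is essentially the same combinatorial content as the border-strip expansion, so it should be proved rather than asserted if you want the argument self-contained. In short: correct standard proof of a result the paper only cites, with the main remaining work concentrated exactly where you identified it.
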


	\section{Implementation} \label{alg-details}
	Theorems \ref{charpolyformula} and \ref{polystatformula} give a procedure to compute the coefficients $d_i(\la)$. 
	The complete algorithm may now be described succinctly by StableCoefficients, with input $\la = (\la_1, \dots, \la_r)$ a tuple of nonincreasing positive integers representing a Young Tableau and $maxDegree$ a positive integer representing what exponent the power series in $z$ will be computed to.
	\begin{algorithm} 
		\caption{StableCoefficients ($\la$, $maxDegree$)}
		\begin{algorithmic}
			\State $\chi^\la \gets \text{YoungToCharPoly}(\la)$ \Comment{Algorithm (\ref{youngtocharpolyalg})}
			
			\State \Return $\text{PolynomialStatistic}(\chi^\la, maxDegree)$ \Comment{Algorithm (\ref{polystatalg})}
		\end{algorithmic}
	\end{algorithm}
	\begin{algorithm} 
		\caption{YoungToCharPoly ($\la$)} \label{youngtocharpolyalg}
		\begin{algorithmic}
			\State AllMu$ \gets \{\mu \subset \la \ \Big| \ \la - \mu \text{ is a vertical strip}\}$
			\State $P^\la \gets 0$
			\ForAll{Partitions $\rho$ with $|\rho| \leq |\la|$}
			\State $F^\la_\rho \gets 0$
			\ForAll{$\mu \in \text{AllMu}$ with $|\mu| = |\rho|$}
			\State $F^\la_\rho \gets F^\la_\rho + (-1)^{|\la| - |\rho|}\text{CharEval}(\mu, \rho)$ \Comment{Algorithm \ref{chareval}}
			\EndFor
			\State $P^\la \gets P^\la + F^\la_\rho \binom{X}{\rho}$
			\EndFor
			\State \Return $P^\la$
		\end{algorithmic}
	\end{algorithm}
	
	When initializing AllMu in Algorithm \ref{youngtocharpolyalg}, we iterate over all partitions $\mu$ with $|\mu| \leq |\la|$, and check whether $\mu \subset \la$ and $\la - \mu$ is a vertical strip. There are more efficient solutions, but since we already iterate over all partitions $\rho$ with $|\rho| \leq |\la|$ in the following loop, this does not affect the asymptotic complexity of the algorithm.
	
	\begin{algorithm} 
		\caption{CharEval ($\mu, \rho$)} \label{chareval}
		\begin{algorithmic}
			\If{$|\mu| \leq 1$}
			\State \Return $1$
			\EndIf
			
			\State $C \gets 0$
			\ForAll{$\xi \in BS(\mu, \rho_1)$}
			\State $C \gets C + (-1)^{ht(\xi)}\text{CharEval}(\mu - \xi, \rho - \rho_1)$
			\EndFor
			\State \Return $C$
		\end{algorithmic}
	\end{algorithm}
	
	To iterate over $BS(\mu, \rho_1)$ in Algorithm \ref{chareval}, notice that any $\xi \in BS(\mu, \rho_1)$ must consist of the rightmost box of one row of $\mu$ and then zig-zag downwards along the rightmost side of $\mu$. Thus, any $\xi \in BS(\mu, \rho_1)$ is determined by the topmost row it touches. Thus, we test each possible starting row to find all $\xi$.
	
	\begin{algorithm} 
		\caption{PolynomialStatistic ($\chi^\la, maxDegree$)}\label{polystatalg}
		\begin{algorithmic}
			\State $\text{result} \gets 0$
			\ForAll{monomial terms $a \binom{X}{\rho}$ of $\chi^\la$ with $\rho = 1^{\rho_1} \dots s^{\rho_s}$}
			\State $R \gets a$
			\For{$1 \leq t \leq s$}
			\State $R \gets R \cdot \binom{M_{t}(z^{-1})}{\rho_t}(z^{t} - z^{2t} + \dots )^{\rho_t}$
			\Comment{(computed up to $q^{maxDegree}$)}
			\EndFor 
			\State $\text{result} \gets \text{result} + (1-z)R$
			\EndFor
			\State \Return result
		\end{algorithmic}
	\end{algorithm}
	Notice that $\chi^\la$ is naturally computed in Algorithm \ref{youngtocharpolyalg} in terms of the binomial basis \ref{binombasis}. Thus, we store $\chi^\la$ in terms of the binomial basis as input for PolynomialStatistic.
	
	\subsection{Correctness and Efficiency}
	
	\renewcommand{\labelenumi}{\textbf{\arabic{enumi}.}}
	\begin{enumerate}
		\vspace{.1in}
		\item \textit{Computing the output of Algorithm \ref{polystatalg} accurately to maxDegree terms.}
		\vspace{.1in}\par
		We rely on two basic facts of formal power series arithmetic. Suppose that 
		$$f(z) = a_{m}z^{-m} + a_{m-1}z^{-m+1} + \dots  \hspace{.5in} g(z) = b_nz^{-n} + b_{n-1}z^{-n+1} + \dots$$
		are two formal power series in $z$ with some negative exponent terms. 
		\vspace{.1in}\par
		\textbf{Fact 1. } To compute $f \cdot g$ up to the coefficient of $z^{maxDegree}$, it suffices to compute $f_{maxDegree + n}(z) \cdot g_{maxDegree + m}(z)$, where $f_{k}$ is the finite Laurent series formed by truncating $f$ to degree $k$ in $z$. 
		\vspace{.1in}\par
		\textbf{Fact 2.} To compute $f + g$ up to the coefficient of $z^{maxDegree}$, it suffices to compute $f_{maxDegree} + g_{maxDegree}$. 
		\vspace{.1in}\par
		Since $\binom{M_t(z^{-1})}{\rho_t}$ has degree $t \rho_t$ in $z^{-1}$, and $(z^t - z^{2t} + \dots)^{\rho_t}$ is a power series with leading degree $t \rho_t$, each of the terms $\binom{M_t(z^{-1})}{\rho_t} (z^t - z^{2t} + \dots)^{\rho_t}$ is a power series with all nonnegative exponents of $z$. Thus, it suffices (by Facts 1 and 2) to compute each power series $\binom{M_t(z^{-1})}{\rho_t} (z^t - z^{2t} + \dots)^{\rho_t}$ up to the coefficient of $z^{maxDegree}$. By Fact 1, this product can be computed by truncating $(z^t - z^{2t} + \dots)^{\rho_t}$ to degree ${maxDegree + t}$. The coefficient of $z^{lt}$ in $(z^t - z^{2t} + \dots)^{\rho_t}$ is given by 
		\begin{equation} \label{coefequation}
			[z^{lt}]\Big(z^t - z^{2t} + \dots\Big)^{\rho_t} = (-1)^{l - \rho_t}\binom{l-1}{l - \rho_t}.
		\end{equation}

		\vspace{.1in}
		\item \textit{Memoization.} 
		\vspace{.1in}\par
		The recursive Murnaghan-Nakayama rule \ref{murnaghan_nakayama} relies on recursively computing $\chi^\la_\rho$ and thus lends itself to memoization. Each time a character $\chi^\la_\rho$ is computed, we save the result for later calls to the same function. Otherwise we apply the recursive Murnaghan-Nakayama rule, leading to further function calls of Algorithm \ref{chareval}. In practice we build the character tables of $S_m$ in order from $m = 0, 1, 2,\dots$ so the recursive depth never reaches more than $1$. 
		\vspace{.1in}\par
		In Algorithm \ref{polystatalg}, we repeatedly compute power series 
		$$\psi_t^{\rho_t} := \binom{M_t(z^{-1})}{\rho_t} (z^t - z^{2t} + \dots)^{\rho_t} \hspace{.5in} \Phi_\rho^\infty = (1-z) \sum_{t=1}^r \psi_t^{\rho_t}.$$
		Thus, we store $\psi_t^{\rho_t}$ and $\Phi_\rho^\infty$ each time they are computed. Furthermore when computing $\psi^{\rho_t}_t$, we repeatedly compute the coefficients of $ (z^t - z^{2t} + \dots)^{\rho_t}$.
		These coefficients are the same independent of $t$, so we store the result (of Equation \ref{coefequation}) to avoid recomputing them. Finally, to compute the power series $\binom{M_t(z^{-1})}{\rho_t}$, we use the basic recursion
		$$\binom{M_t(z^{-1})}{\rho_t} = \frac{M_t(z^{-1}) - \rho_t + 1}{\rho_t} \binom{M_t(z^{-1})}{\rho_t-1}$$
		to compute $\binom{M_t(z^{-1})}{\rho_t}$ from $\binom{M_{t-1}(z^{-1})}{\rho_t-1}$, storing previous values for speedup.
		\vspace{.1in}
		\item \textit{Correctness.} 
		\vspace{.1in}\par
		The correctness of the algorithm is due to the proofs within this paper along with the work of Chen and Macdonald. Additionally, the results agree with previous examples computed as discussed in Section \ref{past_data}.
		In order to eliminate arithmetic errors like overflow or floating point, we utilize the \href{https://learn.microsoft.com/en-us/dotnet/api/system.numerics.biginteger?view=net-8.0}{BigInteger} class in C\# to implement a BigRational class, which stores numerator and denominator as BigIntegers. Additionally, we have written an extensive suite of test cases built from by hand examples for every step of the algorithm. The code is available along with test cases on \href{https://github.com/LimeHero/RepresentationStability}{GitHub}. Additionally, the outputs satisfy expected combinatorial conditions, like the coefficient of $z^i$ being an integer and of sign $(-1)^i$, which are usually not satisfied with any minor errors in the code.

		\vspace{.1in}
		\item \textit{Algorithmic Complexity.} \vspace{.1in}\par
		Let $p(n)$ be the number of integer partitions of $n$ and let $A(n) = \sum_{i=0}^n p(n)$.
		For simplicity, we assume that arithmetic between elements of the BigRational class is $O(1)$. Note that elements of the BigRational class are \textit{always} reduced fractions (numerator and denominator are relatively prime). In particular, after any arithmetic operation between rational numbers, we reduce the result utilizing the \href{https://learn.microsoft.com/en-us/dotnet/api/system.numerics.biginteger.greatestcommondivisor?view=net-8.0}{GreatestCommonDivisor} method. We note that there may be more efficient implementations. \vspace{.1in}\par
		We consider the algorithmic complexity of computing StableCoefficients($\la, maxDegree$) for all $|\la| \leq n$. Through memoization, the coefficients $\chi^\mu_\rho = \text{CharEval}(\mu, \rho)$ are only computed once for each $|\mu| = |\rho|$, and the power series $\Phi_\rho^\infty(z)$ are only computed once each for each $|\rho| \leq n$. Thus, we consider the complexity of each of these steps separately. In particular, it suffices to take the sum of the algorithmic complexity of: \textbf{(i)} solving for each power series $\Phi_\rho^\infty(z)$ to $maxDegree$ terms, \textbf{(ii)} determining the character tables of $S_m$ for $m \leq n$, \textbf{(iii)} determining the character polynomials $\chi^\la$ for all $|\la| \leq n$ given the result of (ii), \textbf{(iv)} Algorithm \ref{polystatalg} for all $|\la| \leq n$ given the results of (i) and (iii).
		\renewcommand{\labelenumii}{\textbf{(\roman{enumii})}}
		\begin{enumerate}
			\item We consider the algorithmic complexity of determining each $\Phi_\rho^\infty(z)$ for $|\rho| \leq n$. We note that we only need to compute the coefficients of $(z - z^{2} + \dots)^{k}$ up to $maxDegree + n$ and for all $k \leq n$ once. By Equation \ref{coefequation} given for the coefficients of $(z - z^{2} + \dots)^{k}$ computing any one such coefficient is bounded by $O(n)$. For each power series we compute $maxDegree + n$ terms and there are $n$ total power series, so altogether this takes $O(n(maxDegree + n))$. This is dominated by other terms so we may ignore it. Similarly by the memoization of the terms $\binom{M_t(z^{-1})}{\rho_t}$ we only compute each such binomial term once. By definition,
			$$\binom{M_t(z^{-1})}{\rho_t} = \frac{M_t(z^{-1})(M_t(z^{-1}) - 1) \dots (M_t(z^{-1}) - \rho_1 + 1)}{\rho_t!}.$$
			The numerator involves $\rho_t$ products of polynomials of degree at most $t$. Since $t \cdot \rho_t \leq |\rho| \leq n$, the complexity of computing this expression for a pair $(t, \rho_t)$ is $O(n^2)$, and computing this polynomial for all such $(t, \rho_t)$ is thus $O(n^3)$. We repeat Equation \ref{polystat} again for convenience:
			$$\Phi^\infty_\rho(z) = (1-z)\prod_{t=1}^r \binom{M_t(z^{-1})}{\rho_t}(z^t - z^{2t} + \dots)^{\rho_t}.$$
			Given these results, we have that $\Phi^\infty_\rho(z)$ is a product of at most $|\rho|$ formal polynomials each with at most $|\rho|$ and $maxDegree$ terms respectively. Thus for any such $\rho$, computing $\Phi^\infty_\rho$ is $O(|\rho|^2 maxDegree)$, and so computing all such polynomials is 
			$$O(A(n)n^2maxDegree).$$
			
			\item The algorithmic complexity of finding the character table of $S_m$ for all $m \leq n$ via the Murnhaghan-Nakayama rule is dominated by finding all border strips $BS(\mu, \rho_1)$ for all pairs $\mu, \rho$. Given any pair $\mu, \rho$, the algorithmic complexity of finding $BS(\mu, \rho_1)$ is $O(r)$ where $r$ is the lenght of $\mu$. Furthermore, there are at most $A(n)^2$ pairs $\mu, \rho$. Thus, the algorithmic complexity of finding all such character tables is
			$$O(A(n)^2n).$$
			
			\item Now we consider the complexity of Algorithm \ref{youngtocharpolyalg} run on all $\la$ with $|\la| \leq n$, given the result of (ii).We iterate over all $\rho$ with $|\rho| \leq |\la|$, and then all $\mu$ with $|\mu| = |\rho|$ such that $\la - \mu$ is a vertical strip. Thus, the algorithmic complexity of Algorithm \ref{youngtocharpolyalg} (run on all $|\la| \leq n$) is bounded by
			$$O\bigg(\sum_{|\la| \leq n} \sum_{|\rho| \leq |\la|} \sum_\mu 1\bigg) \leq O\bigg(\sum_{|\la| \leq n} \sum_{|\rho| \leq n} \sum_{|\mu| \leq n} 1\bigg) = O(A(n)^3).$$
			
			\item 
			Given that each of the power series $\Phi_\rho^\infty(z)$ have already been computed, Algorithm \ref{polystatalg} solely involves taking the sum of a collection of power series indexed by the partitions $\rho$ with $|\rho| \leq |\la|$, each containing $maxDegree$ terms. Thus, Algorithm \ref{polystatalg} has algorithmic complexity
			$$O\bigg(\sum_{|\la| \leq n} \sum_{|\rho| \leq |\la|} maxDegree\bigg) \leq O(A(n)^2 maxDegree).$$
		\end{enumerate} 
		Therefore, the overall algorithmic complexity for all $|\la| \leq n$ is
		$$O(A(n)n^2maxDegree + A(n)^2n + A(n)^3 + A(n)^2 maxDegree)$$
		$$=\boxed{O(A(n)^3 + A(n)^2maxDegree)}.$$
		In practice it is faster.
	\end{enumerate}

	\printbibliography

\end{document}